\documentclass[reqno]{amsart}
\usepackage{amssymb,mathtools,lineno}

\usepackage{ifpdf}
\ifpdf
 \usepackage[hyperindex]{hyperref}
\else
 \expandafter\ifx\csname dvipdfm\endcsname\relax
 \usepackage[hypertex,hyperindex]{hyperref}
 \else
 \usepackage[dvipdfm,hyperindex]{hyperref}
 \fi
\fi

\allowdisplaybreaks[4]

\theoremstyle{plain}
\newtheorem{thm}{Theorem}[section]
\newtheorem{cor}{Corollary}[section]
\newtheorem{lem}{Lemma}[section]

\newtheorem*{thma}{Theorem~A}
\newtheorem*{thmb}{Theorem~B}
\newtheorem*{thmc}{Theorem~C}
\newtheorem*{thmd}{Theorem~D}
\newtheorem*{thme}{Theorem~E}

\theoremstyle{remark}
\newtheorem{rem}{Remark}[section]

\DeclareMathOperator{\td}{d\!}

\DeclareMathOperator{\bell}{B}

\numberwithin{equation}{section}

\begin{document}

\title[Identities derived from Gauss hypergeometric functions]
{Combinatorial identities derived from explicit formulas of Gauss hypergeometric functions}

\author[F. Qi]{Feng Qi*}
\address{School of Mathematics and Physics, Hulunbuir University, Inner Mongolia, 021008, China;
17709 Sabal Court, University Village, Dallas, TX 75252-8024, USA}
\email{qifeng618@gmail.com}
\urladdr{\url{https://orcid.org/0000-0001-6239-2968}}

\begin{abstract}
In present paper, with the help of the Fa\`a di Bruno formula and several identities of partial Bell polynomials, the author establishes explicit formulas of the Gauss hypergeometric functions
\begin{gather*}
{\,}_2F_1\biggl(\frac{1-n}{2},\frac{2-n}{2};\frac{3}{2}-m;z^2\biggr), \quad
{\,}_2F_1\biggl(-\frac{n}{2},\frac{1-n}{2};\frac{1}{2}-m;z^2\biggr),\\
{\,}_2F_1\biggl(a,a+\frac{1}{2};\frac{3}{2}-m;z^2\biggr), \quad
{\,}_2F_1\biggl(a,a+\frac{1}{2};\frac{1}{2}-m;z^2\biggr)
\end{gather*}
for $m,n\in\mathbb{N}$ and $a\in\mathbb{C}$, and then derives two combinatorial identities
\begin{equation*}
\sum_{k=0}^{m}\frac{2^k}{k!} \binom{2m-2k}{m-k}
\sum_{\ell=0}^{k} \frac{(-1)^\ell}{2^\ell}
\frac{(2k-2\ell-1)!!}{(n-\ell)!}
\binom{2k-\ell-1}{\ell-1}
=\frac{1}{n!}\binom{2m-n}{m}
\end{equation*}
and
\begin{equation*}
\sum_{k=1}^{m}\frac{1}{(k!)^2}\binom{2m-2k}{m-k}
\sum_{\ell=1}^{k} \binom{k}{\ell}\ell(2k-\ell-1)! (2a)_\ell
=\binom{2m+2a}{m},
\end{equation*}
where $m\in\mathbb{N}_0$, $n\in\mathbb{Z}$, and $a\in\mathbb{C}$. These newly-established identities extend and generalize the nice and beautiful combinatorial identity
\begin{equation*}
\sum_{k=0}^{n} \frac{2^{k}}{k!}\binom{2n-2k}{n-k}
\sum_{j=0}^{k}\frac{(-1)^{j}}{2^j}
\frac{(2k-2j-1)!!}{(n-j)!}
\binom{2k-j-1}{j-1}
=\frac{1}{n!}, \quad n\in\mathbb{N}_0,
\end{equation*}
which was obtained in Theorem~4 of the recent paper ``F. Qi, C.-Y. He, and D. Lim, \textit{Explicit formulas of two Gauss hypergeometric functions and several combinatorial identities}, Discrete Appl. Math. \textbf{393} (2026), 215\nobreakdash--229. DOI: \url{https://doi.org/10.1016/j.dam.2026.06.023}''. 
\end{abstract}

\subjclass{Primary 05A19; Secondary 33C05}

\keywords{Gauss hypergeometric function; combinatorial identity; partial Bell polynomial; Fa\`a di Bruno formula; extended binomial coefficient}

\thanks{*Corresponding author: Feng Qi, qifeng618@gmail.com}

\thanks{This paper was typeset using \AmS-\LaTeX}

\maketitle

\section{Preliminaries}
To ensure everything proceeds smoothly in this paper, we need to prepare the essential preliminaries.

\subsection{Basic notations}
The classical Euler gamma function $\Gamma(z)$ can be defined~\cite[Chapter~3]{Temme-96-book} by
\begin{equation*}
\Gamma(z)=\lim_{n\to\infty}\frac{n!n^z}{\prod_{k=0}^n(z+k)},
\quad
z\in\mathbb{C}\setminus\{0,-1,-2,\dotsc\}.
\end{equation*}
\par
For $n\in\mathbb{N}_0=\{0,1,2,\dotsc\}$ and $z\in\mathbb{C}$, the falling factorial $\langle z\rangle_n$ and the rising factorial $(z)_n$ (also known as the Pochhammer symbol or shifted factorial) are defined, respectively, by
\begin{equation*}
\langle z\rangle_n
=\prod_{k=0}^{n-1}(z-k)
=
\begin{cases}
z(z-1)\dotsm(z-n+1), & n\in\mathbb{N};\\
1, & n=0
\end{cases}
\end{equation*}
and
\begin{equation*}
(z)_n
=\prod_{\ell=0}^{n-1}(z+\ell)
=
\begin{cases}
z(z+1)\dotsm(z+n-1), & n\in\mathbb{N};\\
1, & n=0,
\end{cases}
\end{equation*}
where $\mathbb{N}=\{1,2,\dotsc\}$.
\par
Both the falling factorial $\langle z\rangle_n$ and the rising factorial $(z)_n$ admit extensions to arbitrary integers $n\in\mathbb{Z}=\{0,\pm1,\pm2,\dotsc\}$ via the gamma function $\Gamma(z)$, namely,
\begin{equation*}
\langle z\rangle_n=\frac{\Gamma(z+1)}{\Gamma(z-n+1)}
\quad\text{and}\quad
(z)_n=\frac{\Gamma(z+n)}{\Gamma(z)}.
\end{equation*}
\par
In~\cite[p.~154]{GKP-Concrete-Math-2nd} and~\cite[p.~17, Identity~17]{Spivey-art-2019}, the classical binomial coefficient $\binom{n}{k}$ for $n\ge k\ge0$ is generalized as
\begin{equation}\label{falling-binomial-eq}
\binom{z}{k}=
\begin{dcases}
\frac{\langle z\rangle_k}{k!}, & k\ge0;\\
0, & k<0
\end{dcases}
\end{equation}
for $z\in\mathbb{R}$ and $k\in\mathbb{Z}$. According to the definition in~\eqref{falling-binomial-eq}, we have $\binom{-1}{-1}=0$, which contradicts the convention $\binom{-1}{-1}=1$ adopted in~\cite[Theorem~4]{DA19034-cas-sc.tex}. To avoid this inconsistency, we redefine the extended binomial coefficient $\binom{z}{w}$ for $z,w\in\mathbb{C}$ as
\begin{equation*}
\binom{z}{w}=
\begin{dcases}
\frac{\Gamma(z+1)}{\Gamma(w+1)\Gamma(z-w+1)}, & z\not\in\mathbb{N}_-,\quad w,z-w\not\in\mathbb{N}_-;\\
0, & z\not\in\mathbb{N}_-,\quad w\in\mathbb{N}_- \text{ or } z-w\in\mathbb{N}_-;\\
\frac{\langle z\rangle_w}{w!},& z\in\mathbb{N}_-, \quad w\in\mathbb{N}_0;\\
\frac{\langle z\rangle_{z-w}}{(z-w)!}, & z,w\in\mathbb{N}_-, \quad z-w\in\mathbb{N}_0;\\
0, & z,w\in\mathbb{N}_-, \quad z-w\in\mathbb{N}_-;\\
\infty, & z\in\mathbb{N}_-, \quad w\not\in\mathbb{Z},
\end{dcases}
\end{equation*}
such that $\binom{-1}{-1}=1$ is valid, where $\mathbb{N}_-=\{-1,-2,\dotsc\}$ denotes the set of all negative integers. In what follows, we always use this definition by default.
\par
The double factorial of negative odd integers $-(2k+1)$ is defined by
\begin{equation*}
(-2k-1)!!=\frac{(-1)^k}{(2k-1)!!}=(-1)^k\frac{(2k)!!}{(2k)!}, \quad k\in\mathbb{N}_0.
\end{equation*}

\subsection{Gauss hypergeometric function and Chu--Vandermonte identity}
For $a,b\in\mathbb{C}$, $c\in\mathbb{C}\setminus\{0,-1,-2,\dotsc\}$, and $z\in\mathbb{C}$, the Gauss hypergeometric function ${\,}_2F_1(a,b;c;z)$ is defined by
\begin{equation}\label{Gauss-hypergeom-f}
{\,}_2F_1(a,b;c;z)
=\sum_{n=0}^\infty
\frac{(a)_n(b)_n}{(c)_n}\frac{z^n}{n!}.
\end{equation}
If $a\in\{0,-1,-2,\dotsc\}$ or $b\in\{0,-1,-2,\dotsc\}$, then the series in~\eqref{Gauss-hypergeom-f} terminates and hence reduces to a polynomial in $z$. Otherwise, when $a,b\notin\{0,-1,-2,\dotsc\}$, the series converges for $|z|<1$ and, on the unit circle $|z|=1$, is
\begin{enumerate}
\item
divergent if $\Re(a+b-c)\ge1$;
\item
absolutely convergent if $\Re(a+b-c)<0$;
\item
conditionally convergent if $0\le\Re(a+b-c)<1$, except at the point $z=1$.
\end{enumerate}
When $c\in\{0,-1,-2,\dotsc\}$, the function ${\,}_2F_1(a,b;c;z)$ is still well defined provided that $a\in\{-1,-2,\dotsc\}$ with $a>c$, or $b\in\{-1,-2,\dotsc\}$ with $b>c$. Moreover, the Gauss hypergeometric function ${\,}_2F_1(a,b;c;z)$ is single-valued analytic on $\mathbb{C}\setminus[1,\infty)$. For further details on ${\,}_2F_1(a,b;c;z)$, see~\cite[Chapter~5]{Temme-96-book}.
\par
The well-known Chu--Vandermonte identity
\begin{equation}\label{Entry15.4.24NIST-HB-2010}
{\,}_2F_1(-n,b;c;1)=\frac{(c-b)_n}{(c)_n},\quad n\in\mathbb{N}_0
\end{equation}
in~\cite[p.~387, Entry~15.4.24]{NIST-HB-2010} will be used in what follows.

\subsection{Partial Bell polynomials and Fa\`a di Bruno formula}
The partial Bell polynomials $\bell_{n,k}(z_1,z_2,\dotsc,z_{n-k+1})$, also known as the Bell polynomials of the second kind, are defined for $(z_1,z_2,\dotsc,z_{n-k+1})\in\mathbb{C}^{\,n-k+1}$ and $n\ge k\in\mathbb{N}_0$ by
\begin{equation*}
\frac{\bell_{n,k}(z_1,z_2,\dotsc,z_{n-k+1})}{n!}
=
\sum_{\substack{\sum_{i=1}^{n-k+1}i\ell_i=n,\,\sum_{i=1}^{n-k+1}\ell_i=k,\,\ell_i\in\{0\}\cup\mathbb{N}}}
\prod_{i=1}^{n-k+1}
\biggl[
\frac{1}{\ell_i!}
\biggl(\frac{z_i}{i!}\biggr)^{\ell_i}
\biggr],
\end{equation*}
with the special cases $\bell_{0,0}(x_1)=1$ and
\begin{equation*}
\bell_{k,0}(z_1,z_2,\dotsc,z_{k+1})=0,\quad k\in\mathbb{N};
\end{equation*}
see~\cite[Definition~11.2]{Charalambides-book-2002} and \cite[p.~134, Theorem~A]{Comtet-Combinatorics-74}.
They satisfy the scaling identity
\begin{equation}\label{Bell(n-k)}
\bell_{n,k}\bigl(abz_1,ab^2z_2,\dotsc,ab^{n-k+1}z_{n-k+1}\bigr)
=
a^kb^n
\bell_{n,k}(z_1,z_2,\dotsc,z_{n-k+1}),
\end{equation}
where $n\ge k\in\mathbb{N}_0$; see~\cite[p.~412]{Charalambides-book-2002} and~\cite[p.~135]{Comtet-Combinatorics-74}.
\par
On~\cite[p.~169]{CDM-68111.tex}, in the proof of~\cite[Theorem~3.2]{CDM-68111.tex}, Qi and his coauthors established the formula
\begin{multline}\label{Bell-Polyn-Half}
\bell_{n,k}\biggl(\biggl\langle\frac12\biggr\rangle_1, \biggl\langle\frac12\biggr\rangle_2, \dotsc, \biggl\langle\frac12\biggr\rangle_{n-k+1}\biggr)\\
=(-1)^{n+k}\frac{(2n-2k-1)!!}{2^n}\binom{2n-k-1}{k-1},
\end{multline}
respectively, for $n\ge k\in\mathbb{N}_0$. In what follows, this formula will play an important role technically.
\par
Expressed in terms of the partial Bell polynomials $\bell_{n,k}$, the Fa\`a di Bruno formula takes the form
\begin{equation}\label{Bruno-Bell-Polynomial}
[f\circ h(z)]^{(n)}
=
\sum_{k=0}^n
f^{(k)}(h(z))
\bell_{n,k}\bigl(
h'(z),h''(z),\dotsc,h^{(n-k+1)}(z)
\bigr),
\end{equation}
where $n\in\mathbb{N}_0$, $f$ and $h$ are $n$ and $n+1$ times differentiable, respectively, and $f\circ h$ denotes the composition of $f$ and $h$; see~\cite[Theorem~11.4]{Charalambides-book-2002} and~\cite[p.~139, Theorem~C]{Comtet-Combinatorics-74}.

\subsection{Two lemmas}
To facilitate the proofs of the main results presented in the next section, we now establish the following lemmas.

\begin{lem}\label{lem-deriv-Gauss}
For $\alpha,\beta,\gamma\in\mathbb{C}$ and $n\in\mathbb{N}_0$, we have the derivative formula
\begin{multline}\label{Eq-deriv-Gauss}
\bigl[\bigl(\alpha+\beta\sqrt{z}\,\bigr)^{\gamma}\bigr]^{(n)}\\
=\frac{(-1)^n}{(2z)^n} \sum_{k=0}^{n}(-\gamma)_k (2n-2k-1)!!\binom{2n-k-1}{k-1}\bigl(\beta\sqrt{z}\,\bigr)^k \bigl(\alpha+\beta\sqrt{z}\,\bigr)^{\gamma-k}.
\end{multline}
\end{lem}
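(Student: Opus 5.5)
We apply the Fa\`a di Bruno formula~\eqref{Bruno-Bell-Polynomial} with outer function $f(u)=u^\gamma$ and inner function $h(z)=\alpha+\beta\sqrt{z}=\alpha+\beta z^{1/2}$. The outer derivatives are
\begin{equation*}
f^{(k)}(u)=\langle\gamma\rangle_k u^{\gamma-k}=(-1)^k(-\gamma)_k u^{\gamma-k},
\end{equation*}
because $\langle\gamma\rangle_k=(-1)^k(-\gamma)_k$. The inner derivatives are
\begin{equation*}
h^{(i)}(z)=\beta\biggl\langle\frac12\biggr\rangle_i z^{1/2-i}.
\end{equation*}
Hence
\begin{equation*}
\bigl[\bigl(\alpha+\beta\sqrt{z}\,\bigr)^{\gamma}\bigr]^{(n)}
=\sum_{k=0}^n(-1)^k(-\gamma)_k\bigl(\alpha+\beta\sqrt{z}\,\bigr)^{\gamma-k}
\bell_{n,k}\biggl(\beta\biggl\langle\frac12\biggr\rangle_1z^{-1/2},\dotsc,\beta\biggl\langle\frac12\biggr\rangle_{n-k+1}z^{1/2-(n-k+1)}\biggr).
\end{equation*}

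Next we simplify the Bell polynomial with the scaling identity~\eqref{Bell(n-k)}. The $i$-th argument is $\beta z^{1/2}\cdot z^{-i}\langle\frac12\rangle_i$, so we take $a=\beta\sqrt{z}$ and $b=1/z$. This gives
\begin{equation*}
\bell_{n,k}(\dotsm)=\bigl(\beta\sqrt z\,\bigr)^k z^{-n}\,\bell_{n,k}\biggl(\biggl\langle\frac12\biggr\rangle_1,\dotsc,\biggl\langle\frac12\biggr\rangle_{n-k+1}\biggr).
\end{equation*}
We then substitute the closed form~\eqref{Bell-Polyn-Half}, which equals $(-1)^{n+k}\frac{(2n-2k-1)!!}{2^n}\binom{2n-k-1}{k-1}$.

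Collecting the signs gives $(-1)^k(-1)^{n+k}=(-1)^n$, and the powers combine as $z^{-n}2^{-n}=(2z)^{-n}$. This yields exactly~\eqref{Eq-deriv-Gauss}.

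The main point needing care is the boundary case $k=0$. When $n\ge1$, we have $\bell_{n,0}=0$, and correspondingly $\binom{2n-1}{-1}=0$ under the extended definition, since $z=2n-1\notin\mathbb{N}_-$ and $w=-1\in\mathbb{N}_-$. When $n=0$, the formula needs $(-1)!!\binom{-1}{-1}=1$, which is precisely why the paper adopts the convention $\binom{-1}{-1}=1$. We would check that~\eqref{Bell-Polyn-Half} is consistent with these conventions at $k=0$, and also at $k=n$, where $(-1)!!=1$ and $\binom{n-1}{n-1}=1$. We would also note that $z$ must avoid the branch issues of $\sqrt z$, for instance by taking $z>0$ or working on a suitable domain; the identity is then purely formal in $\alpha,\beta,\gamma$.
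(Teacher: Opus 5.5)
Your proposal is correct and matches the paper's proof essentially step for step: Fa\`a di Bruno with $f(u)=u^\gamma$ and $h(z)=\alpha+\beta\sqrt{z}$, then the scaling identity~\eqref{Bell(n-k)} with $a=\beta\sqrt{z}$ and $b=1/z$, then the closed form~\eqref{Bell-Polyn-Half}, then $\langle\gamma\rangle_k=(-1)^k(-\gamma)_k$ to combine the signs. Your explicit check of the boundary terms $k=0$ and $k=n$ against the convention $\binom{-1}{-1}=1$ and $(-1)!!=1$ is a sensible addition that the paper leaves implicit.
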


\begin{proof}
In view of the Fa\`a di Bruno formula~\eqref{Bruno-Bell-Polynomial}, with the help of the identity~\eqref{Bell(n-k)}, and by virtue of the formula~\eqref{Bell-Polyn-Half}, we arrive at
\begin{align*}
&\bigl[\bigl(\alpha+\beta\sqrt{z}\,\bigr)^{\gamma}\bigr]^{(n)}
=\sum_{k=0}^{n}(u^\gamma)^{(k)} \bell_{n,k}\biggl(\beta\biggl\langle\frac{1}{2}\biggr\rangle_1z^{1/2-1}, \beta\biggl\langle\frac{1}{2}\biggr\rangle_2z^{1/2-2},\\
&\quad\dotsc,\beta\biggl\langle\frac{1}{2}\biggr\rangle_{n-k+1}z^{1/2-(n-k+1)}\biggr), \quad u=u(z)=\alpha+\beta\sqrt{z}\,\\
&=\sum_{k=0}^{n}\langle\gamma\rangle_k\bigl(\alpha+\beta\sqrt{z}\,\bigr)^{\gamma-k} \beta^kz^{k/2-n} \bell_{n,k}\biggl(\biggl\langle\frac12\biggr\rangle_1, \biggl\langle\frac12\biggr\rangle_2,\dotsc, \biggl\langle\frac12\biggr\rangle_{n-k+1}\biggr)\\
&=\sum_{k=0}^{n}\langle\gamma\rangle_k\bigl(\alpha+\beta\sqrt{z}\,\bigr)^{\gamma-k} \beta^kz^{k/2-n} (-1)^{n+k}\frac{(2n-2k-1)!!}{2^n}\binom{2n-k-1}{k-1}\\
&=\frac{(-1)^n}{(2z)^n} \sum_{k=0}^{n}(-\gamma)_k (2n-2k-1)!!\binom{2n-k-1}{k-1}\bigl(\beta\sqrt{z}\,\bigr)^k \bigl(\alpha+\beta\sqrt{z}\,\bigr)^{\gamma-k}.
\end{align*}
The proof of Lemma~\ref{lem-deriv-Gauss} is complete.
\end{proof}

\begin{lem}\label{Lem2-ID}
For $n\in\mathbb{N}$ and $a\in\mathbb{R}\setminus\bigl\{-\frac{\ell}{2},\ell\in\mathbb{N}\bigr\}$, we have
\begin{equation}\label{ID-Lem2}
\sum_{k=0}^{n}(2n-2k-1)!!\binom{2n-k-1}{k-1}\frac{(2a-1)_{k}}{2^{k-1}}
=\frac{(2a-1) \Gamma(2a+2n-1)}{2^{n-1}\Gamma(2a+n)}.
\end{equation}
\end{lem}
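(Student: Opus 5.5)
The plan is to read the left-hand side of~\eqref{ID-Lem2} as a Bell-polynomial sum of Fa\`a di Bruno type, turn it into a coefficient extraction from an explicit generating function, and evaluate that coefficient with the classical Catalan-power expansion. This is close in spirit to Lemma~\ref{lem-deriv-Gauss}, except that the extra weight $2^{-k}$ is carried as a formal parameter $y$ instead of being produced by $\beta$.

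First I use~\eqref{Bell-Polyn-Half} backwards. Since $(-\gamma)_k=(-1)^k\langle\gamma\rangle_k$, I get
\begin{equation*}
(2n-2k-1)!!\binom{2n-k-1}{k-1}(-\gamma)_k y^k
=(-1)^n 2^n \langle\gamma\rangle_k y^k\,\bell_{n,k}\biggl(\biggl\langle\frac12\biggr\rangle_1,\dotsc,\biggl\langle\frac12\biggr\rangle_{n-k+1}\biggr).
\end{equation*}
The numbers $\langle\frac12\rangle_j$ are the derivatives at $t=0$ of $h(t)=\sqrt{1+t}-1$. With $f(u)=(1+yu)^\gamma$ we have $f^{(k)}(0)=\langle\gamma\rangle_k y^k$. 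So the Fa\`a di Bruno formula~\eqref{Bruno-Bell-Polynomial}, evaluated at $t=0$ where $h(0)=0$, gives
\begin{equation*}
\sum_{k=0}^{n}(2n-2k-1)!!\binom{2n-k-1}{k-1}(-\gamma)_k y^k
=(-2)^n n!\,[t^n]\bigl(1+y(\sqrt{1+t}-1)\bigr)^{\gamma}.
\end{equation*}
I now specialize to $y=\frac12$ and $\gamma=1-2a$, so that $(-\gamma)_k=(2a-1)_k$. The generating function becomes $\bigl(\frac{1+\sqrt{1+t}}{2}\bigr)^{1-2a}$. The left-hand side of~\eqref{ID-Lem2} equals $2$ times this coefficient expression.

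Next I substitute $t=-4x$. Then $\frac{1+\sqrt{1-4x}}{2}=1/C(x)$, where $C(x)=\frac{1-\sqrt{1-4x}}{2x}$ is the Catalan generating function. Hence the function in question is $C(x)^{s}$ with $s=2a-1$. I will invoke the standard Lagrange-inversion formula
\begin{equation*}
[x^n]C(x)^s=\frac{s}{2n+s}\binom{2n+s}{n}.
\end{equation*}
This gives $[t^n]=(-\frac14)^n\frac{2a-1}{2n+2a-1}\binom{2n+2a-1}{n}$. Multiplying by $2(-2)^n n!$ produces $2^{1-n}(2a-1)\frac{\Gamma(2n+2a)}{(2n+2a-1)\Gamma(n+2a)}$. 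This simplifies to the right-hand side of~\eqref{ID-Lem2}. As a sanity check, at $n=1$ both sides equal $2a-1$.

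The main obstacle is justifying the Catalan-power coefficient formula for non-integer $s$, together with the edge cases. For general complex $s$ I would either cite it as a known Lagrange-inversion result or prove it quickly: since $C=1+xC^2$, Lagrange inversion applies to $C-1$. The coefficient is a polynomial in $s$, so it extends from positive integers to all $s$. The excluded values $a\in\{-\frac{\ell}{2}\}$ are exactly those where $\Gamma(2a+n)$ or $\Gamma(2a+2n-1)$ has poles. For the remaining $a$ the identity is an identity of polynomials in $a$, after writing $\Gamma(2a+2n-1)/\Gamma(2a+n)=(2a+n)_{n-1}$. I must also check that the terms with $k=0$ vanish, since $\binom{2n-1}{-1}=0$, consistent with $\bell_{n,0}=0$.
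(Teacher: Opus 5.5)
Your proposal is correct, and it takes a genuinely different route from the paper. The paper argues purely by hypergeometric summation. It writes the summand as $2^{1-n}\frac{(2n-k-1)!}{(n-k)!\,(k-1)!}(2a-1)_k$, shifts $k\mapsto k+1$ and factors $(2a-1)_{k+1}=(2a-1)(2a)_k$. It then recognizes the remaining sum as $\frac{(2n-2)!}{(n-1)!}\,{}_2F_1(2a,1-n;2-2n;1)$ and evaluates it by the Chu--Vandermonde identity~\eqref{Entry15.4.24NIST-HB-2010}.

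You instead run~\eqref{Bell-Polyn-Half} backwards through the Fa\`a di Bruno formula. This identifies the sum with $2(-2)^n n!\,[t^n]\bigl(\frac{1+\sqrt{1+t}}{2}\bigr)^{1-2a}$, and, after $t=-4x$, with a coefficient of $C(x)^{2a-1}$ that you compute by Lagrange inversion. The constants check out, since $(-2)^n(-\frac14)^n=2^{-n}$ and $n!\binom{2n+2a-1}{n}=\Gamma(2n+2a)/\Gamma(n+2a)$.

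The paper's route is shorter and needs only the factorial form of the summand and Chu--Vandermonde. Yours reuses the Bell-polynomial machinery of Lemma~\ref{lem-deriv-Gauss} and explains where the closed form comes from: the sum is a Taylor coefficient of a power of the Catalan generating function. It also proves the identity for every complex $a$ at once, as a polynomial identity with the right-hand side read as $2^{1-n}(2a-1)(2a+n)_{n-1}$, so the exclusion of $a=-\frac{\ell}{2}$ is not needed.

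Two small remarks. First, Lagrange inversion for $B=x(1+B)^2$ with $H(w)=(1+w)^s$ gives $[x^n]C(x)^s=\frac{s}{n}\binom{2n+s-1}{n-1}$ directly for all complex $s$, so you can skip the extension from integer $s$. Second, for fixed $n$ the individual Gamma factors have poles only at $a=-\frac{\ell}{2}$ with $\ell\ge n$. The lemma's excluded set is therefore not ``exactly'' the pole set; it is a superset chosen uniformly in $n$.
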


\begin{proof}
Straightforward computation gives
\begin{align*}
&\quad\sum_{k=0}^{n}(2n-2k-1)!!\binom{2n-k-1}{k-1}\frac{(2a-1)_{k}}{2^{k-1}}\\
&=\frac{1}{2^{n-1}}\sum_{k=1}^{n}\frac{(2n-k-1)!}{(n-k)!(k-1)!}(2a-1)_{k}\\
&=\frac{1}{2^{n-1}}\sum_{k=0}^{n-1}\frac{(2n-k-2)!}{(n-k-1)!k!}(2a-1)_{k+1}\\
&=\frac{2a-1}{2^{n-1}}\sum_{k=0}^{n-1}\frac{(2n-k-2)!}{(n-k-1)!k!}(2a)_{k}\\
&=\frac{(2a-1)(2n-2)!}{2^{n-1}(n-1)!}\sum_{k=0}^{n-1}\frac{(2a)_{k}(1-n)_k}{(2-2n)_k}\frac{1}{k!}\\
&=\frac{(2a-1)(2n-2)!}{2^{n-1}(n-1)!}{\,}_2F_1(2a,1-n;2-2n;1),
\end{align*}
where we derive, by virtue of~\eqref{Entry15.4.24NIST-HB-2010},
\begin{equation*}
{\,}_2F_1(2a,1-n;2-2n;1)=\frac{(2-2n-2a)_{n-1}}{(2-2n)_{n-1}}
=\frac{(n-1)!}{(2n-2)!}\frac{\Gamma(2a+2n-1)}{\Gamma(2a+n)}
\end{equation*}
for $n\in\mathbb{N}$. Consequently, the identity~\eqref{ID-Lem2} is thus proved. The proof of Lemma~\ref{Lem2-ID} is complete.
\end{proof}

\section{Motivations and main results}
In~\cite{DA19034-cas-sc.tex}, among other findings, the following main conclusions were established.

\begin{thma}[{\cite[Theorem~1]{DA19034-cas-sc.tex}}]
For $n\in\mathbb{N}$, the Gauss hypergeometric function ${\,}_2F_1$ has the explicit expression
\begin{multline}\label{2F1(izan-peraz)}
{\,}_2F_1\biggl(\frac{1-n}{2},\frac{2-n}{2};\frac{3}{2}-n;z^2\biggr)
=\frac{1}{4}\frac{1}{\binom{2n-2}{n-1}}\sum_{k=1}^{n}2^k\binom{2n-k-1}{n-1}\binom{n-1}{k-1}\\
\times\bigl[(1-z)^{n-k}+(-1)^{k-1}(1+z)^{n-k}\bigr]z^{k-1}.
\end{multline}
\end{thma}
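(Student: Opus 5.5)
The plan is to get the function in Theorem~A by repeatedly lowering the bottom parameter of an elementary closed form, and then to differentiate that closed form with Lemma~\ref{lem-deriv-Gauss}. Put $x=z^2$ and $a=\frac{1-n}{2}$, so that the upper parameters are $a$ and $a+\frac12$, and $1-2a=n$. I will use two classical facts from~\cite[Chapter~5]{Temme-96-book} and~\cite{NIST-HB-2010}. The first is the elementary evaluation
\begin{equation*}
{\,}_2F_1\biggl(a,a+\frac12;\frac32;z^2\biggr)=\frac{(1+z)^{1-2a}-(1-z)^{1-2a}}{2(1-2a)z},
\end{equation*}
which, for our $a$, reads $\sqrt{x}\,{\,}_2F_1\bigl(a,a+\frac12;\frac32;x\bigr)=\frac{(1+\sqrt{x}\,)^n-(1-\sqrt{x}\,)^n}{2n}$. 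The second is the differentiation formula that lowers the bottom parameter,
\begin{equation*}
\frac{\td^n}{\td x^n}\Bigl[x^{c-1}{\,}_2F_1(a,b;c;x)\Bigr]=(c-n)_n\,x^{c-n-1}{\,}_2F_1(a,b;c-n;x).
\end{equation*}
Both can be checked termwise from the series~\eqref{Gauss-hypergeom-f}. With $c=\frac32$ they give
\begin{equation*}
{\,}_2F_1\biggl(\frac{1-n}{2},\frac{2-n}{2};\frac32-n;x\biggr)
=\frac{x^{n-1/2}}{2n\,(\frac32-n)_n}\biggl[\bigl(1+\sqrt{x}\,\bigr)^n-\bigl(1-\sqrt{x}\,\bigr)^n\biggr]^{(n)}.
\end{equation*}
Since the bottom parameter $\frac32-n$ is never a nonpositive integer, this step is legitimate.

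Next I apply Lemma~\ref{lem-deriv-Gauss} twice, with $\alpha=1$, $\gamma=n$, and $\beta=1$ and $\beta=-1$ respectively. Each term then has the shape $(-n)_k(2n-2k-1)!!\binom{2n-k-1}{k-1}(\pm z)^k(1\pm z)^{n-k}$, carrying the prefactor $\frac{(-1)^n}{(2z^2)^n}$. Several simplifications follow. The $k=0$ term vanishes because $\binom{2n-1}{-1}=0$. We have $(-n)_k=(-1)^k\frac{n!}{(n-k)!}$. The product $(\frac32-n)_n$ reduces to a ratio of double factorials, namely $(\frac32-n)_n=(-1)^{n-1}\frac{(2n-3)!!\,(1/2)}{2^{n-1}}$ up to the exact bookkeeping. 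Finally, $x^{n-1/2}=z^{2n-1}$ cancels against $z^{-2n}$, leaving $z^{k-1}$. The difference of the two derivatives produces exactly the bracket $(1-z)^{n-k}+(-1)^{k-1}(1+z)^{n-k}$, possibly after an overall sign.

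It remains to rewrite the coefficient $\frac{n!}{(n-k)!}(2n-2k-1)!!\binom{2n-k-1}{k-1}$, divided by the constant from $(\frac32-n)_n$, as $\frac{2^k}{4\binom{2n-2}{n-1}}\binom{2n-k-1}{n-1}\binom{n-1}{k-1}$. To do this I write $(2n-2k-1)!!=\frac{(2n-2k)!}{2^{n-k}(n-k)!}$ and $(2n-3)!!=\frac{(2n-2)!}{2^{n-1}(n-1)!}$, and then expand every binomial coefficient into factorials. Both sides should then reduce to the same ratio $\frac{(2n-k-1)!}{(k-1)!\,(n-k)!^2}$ times a factor depending only on $n$, with powers of $2$ tracked separately. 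I expect this factorial matching, together with the overall sign $(-1)^{n}(-1)^{n-1}$ and the factor $\frac14$, to be the main place where errors creep in; everything before it is mechanical.

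As a sanity check I will test $n=1$ and $n=2$ directly. For $n=1$ the left side is ${\,}_2F_1(0,\frac12;\frac12;z^2)=1$, and the right side is $\frac14\cdot2\cdot[1+1]=1$, which is consistent.
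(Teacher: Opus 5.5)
Your plan is correct and is essentially the paper's route: the paper obtains Theorem~A as the case $m=n$ of Theorem~\ref{Gauaa1-thm}, and the proof of that theorem uses exactly your ingredients. These are the closed form \eqref{gauss-form1}, the parameter-lowering formula \eqref{drive-form-gauss-n}, Lemma~\ref{lem-deriv-Gauss} with $\alpha=1$, $\beta=\pm1$, $\gamma=n$, and the evaluations \eqref{Comb-expans}. The bookkeeping you left hedged does close: the difference of the two derivatives carries a factor $-1$ that cancels against $(-1)^n/(-1)^{n-1}$, and $(\frac32-n)_n=(-1)^{n-1}(2n-3)!!/2^n$, so the prefactor is $\frac{(n-1)!}{2(2n-3)!!}=\frac{2^{n-2}}{\binom{2n-2}{n-1}}$, which gives the stated coefficients.
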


\begin{thmb}[{\cite[Theorem~2]{DA19034-cas-sc.tex}}]
For $n\in\mathbb{N}$, the Gauss hypergeometric function ${\,}_2F_1$ has the explicit expression
\begin{multline}\label{2F1(izan-peraz)-more}
{\,}_2F_1\biggl(-\frac{n}{2},\frac{1-n}{2};\frac{1}{2}-n;z^2\biggr)
=\frac{1}{2}\frac{n!}{\binom{2n}{n}}\sum_{k=0}^{n} \frac{2^{k}}{k!}\binom{2n-2k}{n-k}\sum_{j=0}^{k}\frac{(2k-2j-1)!!}{(n-j)!}\\
\times\binom{2k-j-1}{j-1}\bigl[(1-z)^{n-j}+(-1)^{j}(1+z)^{n-j}\bigr]z^{j}.
\end{multline}
\end{thmb}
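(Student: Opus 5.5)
The plan is to lower the bottom parameter from $\frac12$ to $\frac12-n$ by a standard differentiation formula. I would start from the quadratic closed form
\begin{equation*}
{\,}_2F_1\Bigl(a,a+\frac12;\frac12;w\Bigr)=\frac{(1+\sqrt{w}\,)^{-2a}+(1-\sqrt{w}\,)^{-2a}}{2},
\end{equation*}
taken with $a=-\frac n2$. In that case the right-hand side is the polynomial $\frac12\bigl[(1+\sqrt w\,)^n+(1-\sqrt w\,)^n\bigr]$.

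The key classical identity I would use is
\begin{equation*}
\frac{\td^n}{\td w^n}\Bigl[w^{c+n-1}{\,}_2F_1(a,b;c+n;w)\Bigr]=(c)_n w^{c-1}{\,}_2F_1(a,b;c;w).
\end{equation*}
It can be checked termwise, since $\frac{(c)_n}{(c+n)_k}=\frac{(c)_{k+n}}{(c)_k(c+n)_k}\cdot\frac{(c)_k}{(c)_n}$ reduces to $\langle c+n+k-1\rangle_n=\frac{(c)_{k+n}}{(c)_k}$. I take $c=\frac12-n$, $b=\frac{1-n}2$, so that $c+n=\frac12$ and $c+n-1=-\frac12$. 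This gives
\begin{equation*}
{\,}_2F_1\Bigl(-\frac n2,\frac{1-n}2;\frac12-n;w\Bigr)=\frac{w^{n+1/2}}{2\,(\frac12-n)_n}\,\frac{\td^n}{\td w^n}\Bigl\{w^{-1/2}\bigl[(1+\sqrt w\,)^n+(1-\sqrt w\,)^n\bigr]\Bigr\}.
\end{equation*}

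Next I expand the $n$-th derivative by the Leibniz rule. The factor $w^{-1/2}$ contributes
\begin{equation*}
\frac{\td^{n-k}}{\td w^{n-k}}w^{-1/2}=\Bigl\langle-\frac12\Bigr\rangle_{n-k}w^{-1/2-n+k},
\end{equation*}
and $\langle-\frac12\rangle_{n-k}=(-1)^{n-k}\frac{(2n-2k)!}{4^{n-k}(n-k)!}$ supplies the factor $\binom{2n-2k}{n-k}$ together with a factorial. For the $k$-th derivatives of $(1\pm\sqrt w\,)^n$ I apply Lemma~\ref{lem-deriv-Gauss} with $\alpha=1$, $\beta=\pm1$, $\gamma=n$. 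This produces an inner sum over $j\le k$ containing $(-n)_j(2k-2j-1)!!\binom{2k-j-1}{j-1}(\pm\sqrt w\,)^j(1\pm\sqrt w\,)^{n-j}$ and a prefactor $\frac{(-1)^k}{(2w)^k}$. Writing $(-n)_j=(-1)^j\frac{n!}{(n-j)!}$ yields the $\frac{1}{(n-j)!}$ in the statement. The two choices of sign $\beta=\pm1$ combine into the bracket $\bigl[(1-z)^{n-j}+(-1)^j(1+z)^{n-j}\bigr]z^j$ after substituting $w=z^2$ and $\sqrt w=z$; the sign $(-1)^j$ attaches to the $(1+z)$ term, so I must keep track of which branch corresponds to $\beta=-1$.

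Finally I collect the powers of $w$. They are $w^{n+1/2}\cdot w^{-1/2-n+k}\cdot w^{-k}\cdot w^{j/2}=w^{j/2}$, so the result is consistent with $z^j$. The constant is $(\frac12-n)_n=(-1)^n\frac{(2n)!}{4^n n!}$, which should produce $\frac{n!}{\binom{2n}{n}}$ together with the powers $2^k$. I expect the main obstacle to be this bookkeeping of signs and powers of $2$: the factors $(-1)^{n-k}$, $(-1)^k$, $(-1)^n$, $(-1)^j$ and the $4^{n-k}$, $2^k$, $4^n$. These must reduce exactly to the normalization $\frac12\frac{n!}{\binom{2n}{n}}\frac{2^k}{k!}\binom{2n-2k}{n-k}$, and the $\frac1{k!}$ comes from the Leibniz binomial $\binom nk$ combined with $(n-k)!$. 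I would verify the final constant at $n=1$ as a sanity check.
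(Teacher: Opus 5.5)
Your proposal is correct and is essentially the paper's own argument: the paper proves the general-$m$ version, Theorem~\ref{Gauaa2-thm}, by the same chain and then sets $m=n$. That chain is the closed form at $c=\frac12$, the lowering formula \eqref{drive-form-gauss-n}, the Leibniz rule, Lemma~\ref{lem-deriv-Gauss} with $\beta=\pm1$, and the evaluations \eqref{Minus-half-poch}--\eqref{half-poch}, and your bookkeeping does collapse, with all signs cancelling, to $\frac12\frac{n!}{\binom{2n}{n}}\frac{2^k}{k!}\binom{2n-2k}{n-k}\frac{1}{(n-j)!}$. One small slip: the displayed equation in your termwise check of the lowering formula is wrong as written (its right-hand side is identically $1$); the correct cancellation is $\langle c+n+k-1\rangle_n/(c+n)_k=(c)_{n+k}/\bigl[(c)_k(c+n)_k\bigr]=(c)_n/(c)_k$, so the formula you actually use is right.
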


In light of the explicit formula~\eqref{2F1(izan-peraz)} in Theorem~A and the explicit formula~\eqref{2F1(izan-peraz)-more} in Theorem~B, the following two combinatorial identities were discovered.

\begin{thmc}[{\cite[p.~224, Eq.~(37)]{DA19034-cas-sc.tex}}]
For $j,n\in\mathbb{N}_0$, we have
\begin{equation}\label{Not-Found-ID-Spivey-Riodan-Gen}
\sum_{k=0}^n 2^{2k}\binom{n}{k} \binom{k}{j-k}=2^j\binom{2n}{j}.
\end{equation}
\end{thmc}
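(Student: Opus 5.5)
We prove the identity by comparing coefficients in a single polynomial: the right-hand side $2^j\binom{2n}{j}$ is visibly the coefficient of $x^j$ in $(1+2x)^{2n}$. The key observation is that $(1+2x)^2=1+4x+4x^2=1+4x(1+x)$. So we expand the other grouping
\begin{equation*}
(1+2x)^{2n}=\bigl[1+4x(1+x)\bigr]^{n}
\end{equation*}
and read off the same coefficient from it.

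First, by the binomial theorem,
\begin{equation*}
(1+2x)^{2n}=\sum_{j=0}^{2n}2^j\binom{2n}{j}x^j,
\end{equation*}
so the coefficient of $x^j$ is $2^j\binom{2n}{j}$. For $j>2n$ this coefficient is $0$, which agrees with $\binom{2n}{j}=0$ under the extended definition (the case $z\notin\mathbb{N}_-$, $z-w\in\mathbb{N}_-$).

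Second, expanding the other form gives
\begin{equation*}
\bigl[1+4x(1+x)\bigr]^n=\sum_{k=0}^{n}\binom{n}{k}4^k x^k(1+x)^k=\sum_{k=0}^{n}\binom{n}{k}2^{2k}\sum_{i=0}^{k}\binom{k}{i}x^{k+i}.
\end{equation*}
The coefficient of $x^j$ in $x^k(1+x)^k$ is $\binom{k}{j-k}$ whenever $0\le j-k\le k$. Outside that range this coefficient is $0$, and we must check that the paper's extended binomial coefficient also gives $0$ there.

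That check is the only step needing care. Since $k\in\mathbb{N}_0$, the upper entry $z=k$ is not in $\mathbb{N}_-$. If $j-k<0$ then $w\in\mathbb{N}_-$, and if $j-k>k$ then $z-w=2k-j\in\mathbb{N}_-$. In both cases the second line of the definition gives $\binom{k}{j-k}=0$. Note also that the convention $\binom{-1}{-1}=1$ never arises, because the upper index $k$ is nonnegative.

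Hence the coefficient of $x^j$ in the second expansion is $\sum_{k=0}^n 2^{2k}\binom{n}{k}\binom{k}{j-k}$. Equating it with $2^j\binom{2n}{j}$ proves \eqref{Not-Found-ID-Spivey-Riodan-Gen} for all $j,n\in\mathbb{N}_0$.
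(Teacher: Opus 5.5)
Your proof is correct, and it takes a different route from the one behind this statement. The factorization $(1+2x)^2=1+4x(1+x)$ reduces the identity to comparing the coefficient of $x^j$ in two binomial expansions of one polynomial. Your check of the extended binomial coefficients is exactly what is needed. The upper indices $k$ and $2n$ are nonnegative, so outside the natural range only the second line of the definition is used. Hence the convention $\binom{-1}{-1}=1$ plays no role, and the identity holds for every $j\in\mathbb{N}_0$, including $j>2n$.

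The paper gives no proof of its own for Theorem~C. It quotes the identity from the earlier paper of Qi, He, and Lim, where it was found in light of the explicit hypergeometric formulas of Theorems~A and~B, and it only remarks that it recovers Eq.~(1.64) of Gould's tables.

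What each route buys: yours is self-contained and shows that nothing beyond the binomial theorem is needed. The hypergeometric route is much heavier, but it presents the identity as a by-product of explicit ${}_2F_1$ formulas. That is the same comparison mechanism the paper uses to obtain Theorems~\ref{oKpopThm} and~\ref{ThmoKpopThm}.

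The two viewpoints are in fact close. By \eqref{Bell-x-1-0-combin-proof-eq} one has $\bell_{j,k}(1,2,0,\dotsc,0)=\frac{j!}{k!}\binom{k}{j-k}$. So applying the Fa\`a di Bruno formula \eqref{Bruno-Bell-Polynomial} to $(1+2x)^{2n}=\bigl[1+4(x+x^2)\bigr]^n$ at $x=0$ reproduces your coefficient comparison term by term.
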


\begin{rem}
In~\cite[Remark~13]{DA19034-cas-sc.tex}, it was remarked that the combinatorial identity~\eqref{Not-Found-ID-Spivey-Riodan-Gen} recovers Eq.~(1.64) on Page~9 in ``\emph{Tables of Combinatorial Identities}, Vol.~4, Eight tables based on seven unpublished manuscript notebooks (1945\nobreakdash--1990) of H. W. Gould, includes series techniques and certain special numbers. Edited and Compiled by Prof. Jocelyn Quaintance, May 2010''. URL: \url{https://math.wvu.edu/\%7Ehgould/Vol.4.PDF}.
\end{rem}

\begin{thmd}[{\cite[Theorem~4]{DA19034-cas-sc.tex}}]
For $n\in\mathbb{N}_0$, the combinatorial identity
\begin{equation}\label{ID-n-Factorial2sums}
\sum_{k=0}^{n} \frac{2^{k}}{k!}\binom{2n-2k}{n-k} \sum_{j=0}^{k}\frac{(-1)^{j}}{2^j}\frac{(2k-2j-1)!!}{(n-j)!}\binom{2k-j-1}{j-1}
=\frac{1}{n!}
\end{equation}
is valid with the convention $\binom{-1}{-1}=1$.
\end{thmd}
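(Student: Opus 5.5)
The plan is to derive the identity~\eqref{ID-n-Factorial2sums} from Theorem~B by specializing to $z=1$. The case $n=0$ is checked directly: only $k=j=0$ contributes, and with $(-1)!!=1$ and $\binom{-1}{-1}=1$ both sides equal $1$. So assume $n\in\mathbb{N}$.

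\emph{Right-hand side of~\eqref{2F1(izan-peraz)-more} at $z=1$.} The bracket becomes $0^{\,n-j}+(-1)^j2^{n-j}$. The factor $0^{\,n-j}$ survives only when $j=n$, and since $j\le k\le n$ this forces $k=j=n$. That single term equals
\[
\frac{2^n}{n!}\binom{0}{0}(-1)!!\,\frac{1}{0!}\binom{n-1}{n-1}=\frac{2^n}{n!}.
\]
The remaining part is $2^n$ times exactly the double sum $S_n$ on the left of~\eqref{ID-n-Factorial2sums}, because $(-1)^j2^{n-j}=2^n(-1)^j/2^j$. Hence the right-hand side equals
\[
\frac12\,\frac{n!}{\binom{2n}{n}}\,2^n\Bigl(S_n+\frac{1}{n!}\Bigr).
\]

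\emph{Left-hand side at $z=1$.} Here we must evaluate ${}_2F_1\bigl(-\frac n2,\frac{1-n}2;\frac12-n;1\bigr)$. Gauss' summation formula cannot be applied directly, since $c-a-b=0$. Instead I will use the fact that the series terminates and apply the Chu--Vandermonde identity~\eqref{Entry15.4.24NIST-HB-2010}, splitting by parity:
\begin{itemize}
\item If $n=2N$, then $-n/2=-N$, and with $b=\frac{1-n}{2}$ we get the value $\frac{(-N)_N}{(\frac12-2N)_N}$.
\item If $n=2N+1$, then $\frac{1-n}{2}=-N$, and with $b=-\frac n2$ we get the value $\frac{(-N)_N}{(-\frac12-2N)_N}$.
\end{itemize}
The denominator $(c)_j$ never vanishes for $j\le N$, since $c=\frac12-n$ is not an integer, so the formula applies. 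In each case I will rewrite the Pochhammer quotients through factorials and double factorials. The goal is to show that both cases give the common value $n!\,2^n/\binom{2n}{n}$, i.e.\ twice the prefactor $\frac12\frac{n!}{\binom{2n}{n}}2^n$.

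\emph{Conclusion.} Equating the two evaluations gives $S_n+\frac{1}{n!}=\frac{2}{n!}$, that is, $S_n=\frac1{n!}$, which is~\eqref{ID-n-Factorial2sums}.

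The main obstacle is the parity-split Pochhammer simplification. In particular, one must confirm that the value is exactly $n!\,2^n/\binom{2n}{n}$ in both cases, and that the factor $2$ matches. A sign slip in $(-N)_N=(-1)^NN!$ against the sign of $(\tfrac12-2N)_N$ would spoil the identity, so I would check small cases ($n=1,2$) numerically alongside the general computation.
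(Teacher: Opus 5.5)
Your route works and differs from the paper's. One constant in it is wrong, though, and must be corrected before the argument closes. You correctly find that the right-hand side of \eqref{2F1(izan-peraz)-more} at $z=1$ equals $\frac12\frac{n!\,2^n}{\binom{2n}{n}}\bigl(S_n+\frac1{n!}\bigr)$. So the conclusion $S_n=\frac1{n!}$ is equivalent to
\[
{}_2F_1\Bigl(-\frac n2,\frac{1-n}2;\frac12-n;1\Bigr)=\frac{2^n}{\binom{2n}{n}}.
\]
This is $\frac{2}{n!}$ times the prefactor, not $\frac{n!\,2^n}{\binom{2n}{n}}$ (twice the prefactor) as you wrote. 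With the value you announced, equating would give $S_n=2-\frac1{n!}$; your final line $S_n+\frac1{n!}=\frac2{n!}$ only follows from the corrected value. The case $n=2$ already separates them: ${}_2F_1(-1,-\frac12;-\frac32;1)=1-\frac13=\frac23=2^2/\binom42$, while $2!\,2^2/\binom42=\frac43$.

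With the right target, your Chu--Vandermonde step goes through. Use $(-N)_N=(-1)^NN!$, $(\frac12-2N)_N=\frac{(-1)^N}{2^N}\frac{(4N-1)!!}{(2N-1)!!}$ and $(-\frac12-2N)_N=\frac{(-1)^N}{2^N}\frac{(4N+1)!!}{(2N+1)!!}$, together with $(2M)!=2^MM!\,(2M-1)!!$ and $(2M+1)!=2^MM!\,(2M+1)!!$. One gets
\begin{gather*}
\frac{(-N)_N}{(\frac12-2N)_N}=\frac{(2N)!}{(4N-1)!!}=\frac{2^{2N}}{\binom{4N}{2N}}\quad(n=2N),\\
\frac{(-N)_N}{(-\frac12-2N)_N}=\frac{(2N+1)!}{(4N+1)!!}=\frac{2^{2N+1}}{\binom{4N+2}{2N+1}}\quad(n=2N+1),
\end{gather*}
so the value is $2^n/\binom{2n}{n}$ in both parities. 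The rest of your argument is fine: the case $n=0$, the isolation of the $(k,j)=(n,n)$ term, and the factor $2^n$.

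The paper itself does not re-prove Theorem~D; it imports it from the cited work. That work's first proof, quoted in the proof of Theorem~\ref{oKpopThm}, matches Theorem~A against Theorem~B (with $n\to n-1$) at $z=\pm1$. It then needs the alternating sum $\sum_{k=1}^{n}(-1)^{k-1}\binom{2n-k-1}{n-1}\binom{n-1}{k-1}=1$. The case $m=n$ of Theorem~\ref{oKpopThm} is taken from exactly this.

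The only derivation inside the paper is Corollary~\ref{cor-Gauss-2F} with $m=n$, resting on Theorem~\ref{ThmoKpopThm}. That proof again compares two expansions, \eqref{comb-id-Eq1} and the shifted \eqref{comb-id-Eq2}. It evaluates the resulting single sum by Chu--Vandermonde (Lemma~\ref{Lem2-ID}) and extends from $a<-\frac m2$ by analytic continuation.

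You use only one expansion and apply Chu--Vandermonde to the hypergeometric function itself. The lone surviving term $(1-z)^0$ supplies the extra $\frac1{n!}$. This is shorter and needs nothing beyond Theorem~B and \eqref{Entry15.4.24NIST-HB-2010}; the paper's route buys generality in $m$ and $a$.

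Your explicit handling of corner terms is also safer. The paper's rewriting $(2k-2\ell-1)!!\binom{2k-\ell-1}{\ell-1}=\frac{2^\ell\ell(2k-\ell-1)!}{k!\,2^k}\binom{k}{\ell}$ fails at $k=\ell=0$. As printed, \eqref{ThmoKpopQe} therefore lacks the term $\binom{2m}{m}$ on its left side; for $m=1$ it reads $2a=2+2a$.
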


\begin{rem}
In~\cite[Theorem~5.1]{Special-Bell2Euler.tex}, Qi and Guo discovered the formula
\begin{equation}\label{Bell-x-1-0-eq}
\bell_{n,k}(z,1,0,\dotsc,0)
=\frac{(n-k)!}{2^{n-k}}\binom{n}{k}\binom{k}{n-k}z^{2k-n}, \quad n\ge k\in\mathbb{N}_0.
\end{equation}
In the proof of~\cite[Proposition~2]{DA19034-cas-sc.tex}, by virtue of the Fa\`a di Bruno formula~\eqref{Bruno-Bell-Polynomial} and the formula~\eqref{Bell-x-1-0-eq}, the explicit formula
\begin{equation*}
P_n(z)=\frac{\bigl(z^2-1\bigr)^{n}}{2^{2n}z^{n}}\sum_{k=0}^{n} \binom{n}{k}\binom{k}{n-k}2^{2k}\biggl(\frac{z^{2}}{z^2-1}\biggr)^k
\end{equation*}
for $n\in\mathbb{N}_0$ was derived, where 
\begin{equation*}
P_n(z)=\frac{1}{(2n)!!}\frac{\operatorname{d}^n}{\td z^n}\bigl[\bigl(z^2-1\bigr)^n\bigr], \quad n\in\mathbb{N}_0
\end{equation*}
denotes the Legendre polynomials; see~\cite[p.~675]{Brychkov-CRC-2008}.
\end{rem}

In~\cite[Theorem~5]{DA19034-cas-sc.tex}, the formula~\eqref{Bell-x-1-0-eq} was combinatorially extended as follows.

\begin{thme}[{\cite[Theorem~5]{DA19034-cas-sc.tex}}]
For $n\ge k\in\mathbb{N}_0$, the Bell polynomials of the second kind $\bell_{n,k}$ satisfy
\begin{equation}\label{Bell-x-1-0-combin-proof-eq}
\bell_{n,k}(z_1,z_2,0,\dotsc,0)=\frac{(n-k)!}{2^{n-k}}\binom{n}{k}\binom{k}{n-k}z_1^{2k-n}z_2^{n-k}.
\end{equation}
\end{thme}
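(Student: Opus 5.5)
The statement to prove is Theorem~E, the identity~\eqref{Bell-x-1-0-combin-proof-eq}. The plan is to read it directly off the defining sum of the partial Bell polynomials; the scaling identity~\eqref{Bell(n-k)} can serve as a cross-check.

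Put $z_3=\dotsb=z_{n-k+1}=0$ in the definition of $\bell_{n,k}$. Every tuple $(\ell_1,\ell_2,\dotsc,\ell_{n-k+1})$ with some $\ell_i\ge1$ for $i\ge3$ then contributes zero. So only tuples with $\ell_i=0$ for $i\ge3$ survive, and they must satisfy $\ell_1+2\ell_2=n$ and $\ell_1+\ell_2=k$. This system has at most one solution, $\ell_2=n-k$ and $\ell_1=2k-n$, and a solution exists only when $2k\ge n$. (When $n-k+1<2$, that is $n=k$, only $\ell_1=k$ occurs, which agrees.) The sum therefore reduces to
\begin{equation*}
\bell_{n,k}(z_1,z_2,0,\dotsc,0)=\frac{n!}{(2k-n)!\,(n-k)!}\,z_1^{2k-n}\biggl(\frac{z_2}{2}\biggr)^{n-k}.
\end{equation*}

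Next I rewrite the coefficient in the target form:
\begin{equation*}
\frac{(n-k)!}{2^{n-k}}\binom{n}{k}\binom{k}{n-k}
=\frac{(n-k)!}{2^{n-k}}\frac{n!}{k!\,(n-k)!}\frac{k!}{(n-k)!\,(2k-n)!}
=\frac{n!}{2^{n-k}(n-k)!\,(2k-n)!},
\end{equation*}
which matches the expression above.

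The main point needing care is the case $2k<n$. There the left side vanishes because no admissible tuple exists. On the right side, $k-(n-k)=2k-n$ is a negative integer while $k\in\mathbb{N}_0$ is not in $\mathbb{N}_-$. By the second line of the paper's redefinition of $\binom{z}{w}$, this gives $\binom{k}{n-k}=0$, so the right side vanishes too. The boundary cases $k=0$ are also consistent: for $n\ge1$ both sides are $0$, and for $n=0$ both sides are $1$, using $\bell_{0,0}=1$.
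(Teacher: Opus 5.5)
Your argument is correct. With $z_3=\dots=0$ the defining sum collapses to the single tuple $\ell_1=2k-n$, $\ell_2=n-k$, which exists iff $2k\ge n$. The resulting coefficient matches the target. The case $2k<n$ is handled properly by the second clause of the paper's redefinition of $\binom{z}{w}$.

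Your route differs from the paper's, which does not prove Theorem~E at all. The paper quotes it from the earlier work, where it was obtained combinatorially. It then remarks that the result is equivalent, through the scaling identity \eqref{Bell(n-k)}, to the Qi--Guo formula \eqref{Bell-x-1-0-eq}, which was proved analytically. That route takes \eqref{Bell-x-1-0-eq} as input and rescales, for instance with $a=z_2$, $b=1$, $z=z_1/z_2$. It therefore needs $z_2\ne0$, followed by a continuity argument. Its payoff is that it ties the two published formulas together.

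Your direct evaluation is self-contained and needs no auxiliary formula. It covers all $(z_1,z_2)$ at once. It is in effect the algebraic form of the combinatorial count of set partitions of an $n$-set into $2k-n$ singletons and $n-k$ pairs.

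One cosmetic point: when $2k<n$ the factor $z_1^{2k-n}$ has a negative exponent, and it is literally undefined at $z_1=0$. So ``the right side vanishes'' should be read as ``the coefficient of the monomial is zero,'' which is the intended meaning of the statement.
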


\begin{rem}
In view of the identity~\eqref{Bell(n-k)}, we observe that the formulas~\eqref{Bell-x-1-0-eq} and~\eqref{Bell-x-1-0-combin-proof-eq} are equivalent to each other. The distinction between them lies in their proofs: The formula~\eqref{Bell-x-1-0-eq} was analytically established in~\cite{Special-Bell2Euler.tex}, whereas the formula~\eqref{Bell-x-1-0-combin-proof-eq} was combinatorially obtained in~\cite{DA19034-cas-sc.tex}.
\end{rem}

In~\cite[Remark~14]{DA19034-cas-sc.tex}, the following question was asked:
For $m,n\in\mathbb{N}$, what are the explicit expressions of the Gauss hypergeometric functions
\begin{equation}\label{2Gauss}
{\,}_2F_1\biggl(-\frac{n}{2},\frac{1-n}{2};\frac{1}{2}-m;z^2\biggr)
\quad\text{and}\quad
{\,}_2F_1\biggl(\frac{1-n}{2},\frac{2-n}{2};\frac{3}{2}-m;z^2\biggr)?
\end{equation}
\par
Since, for every $n\in\mathbb{N}$, one of $-\frac{n}{2}\in\{0,-1,-2,\dotsc\}$ and $\frac{1-n}{2}\in\{0,-1,-2,\dotsc\}$ holds, and likewise one of $\frac{1-n}{2}\in\{0,-1,-2,\dotsc\}$ and $\frac{2-n}{2}\in\{0,-1,-2,\dotsc\}$ holds, both Gauss hypergeometric functions in~\eqref{2Gauss} are polynomials in $z$.
\par
In this paper, we aim to address the question posed above.
\par
The main results of this paper are summarized as follows.

\begin{thm}\label{Gauaa1-thm}
For $m,n\in\mathbb{N}$, the Gauss hypergeometric function ${\,}_2F_1$ has the explicit expression
\begin{multline}\label{Gauaa1-eq}
{\,}_2F_1\biggl(\frac{1-n}{2},\frac{2-n}{2};\frac{3}{2}-m;z^2\biggr)
=\frac{1}{2^{m+1}}\frac{(m-1)!}{(2m-3)!!} \sum_{k=1}^{m}2^k\binom{n-1}{k-1}\\
\times\binom{2m-k-1}{m-1}\bigl[(1-z)^{n-k}-(-1)^{k}(1+z)^{n-k}\bigr]z^{k-1}.
\end{multline}
\end{thm}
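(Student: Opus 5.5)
The plan is to reduce the statement to Lemma~\ref{lem-deriv-Gauss}, using two classical facts about ${\,}_2F_1$: a closed form at $c=\frac32$ and a derivative formula that lowers $c$. Throughout put $a=\frac{1-n}{2}$, so that $a+\frac12=\frac{2-n}{2}$ and $1-2a=n$.

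The first fact is the quadratic-type closed form
\begin{equation*}
{\,}_2F_1\Bigl(a,a+\tfrac12;\tfrac32;w\Bigr)=\frac{\bigl(1-\sqrt{w}\,\bigr)^{1-2a}-\bigl(1+\sqrt{w}\,\bigr)^{1-2a}}{2(2a-1)\sqrt{w}}.
\end{equation*}
I would check it by expanding both sides as power series in $w$. For our parameters it becomes
\begin{equation*}
\sqrt{w}\,{\,}_2F_1\Bigl(\tfrac{1-n}{2},\tfrac{2-n}{2};\tfrac32;w\Bigr)=\frac{\bigl(1+\sqrt w\,\bigr)^n-\bigl(1-\sqrt w\,\bigr)^n}{2n}.
\end{equation*}
Written this way (dividing by $n$ rather than $2a-1$) it is legitimate for every $n\in\mathbb{N}$, including $n=1$.

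The second fact is the derivative formula
\begin{equation*}
\frac{\td^m}{\td w^m}\Bigl[w^{c-1}{\,}_2F_1(a,b;c;w)\Bigr]=(c-m)_m\,w^{c-m-1}{\,}_2F_1(a,b;c-m;w).
\end{equation*}
It follows termwise from $\frac{\td^m}{\td w^m}w^{c-1+j}=\langle c-1+j\rangle_m w^{c-1+j-m}$ together with $(c)_j\langle c-1+j\rangle_m=(c-m)_m(c-m)_j$. Take $c=\frac32$, so that $c-m=\frac32-m$ and $(c-m)_m=\bigl(\frac32-m\bigr)_m=(-1)^m\frac{(2m-3)!!}{2^m}$, a nonzero constant when $m\ge1$. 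Since $a$ or $a+\frac12$ is a nonpositive integer, both series terminate. The formula is then an identity of finite sums, and the right-hand ${\,}_2F_1$ with $c=\frac32-m$ is well defined as a polynomial in the sense of the preliminaries.

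Combining the two facts gives
\begin{equation*}
{\,}_2F_1\Bigl(\tfrac{1-n}{2},\tfrac{2-n}{2};\tfrac32-m;w\Bigr)=\frac{(-1)^m2^m w^{m-1/2}}{(2m-3)!!\,2n}\,\frac{\td^m}{\td w^m}\Bigl[\bigl(1+\sqrt w\,\bigr)^n-\bigl(1-\sqrt w\,\bigr)^n\Bigr].
\end{equation*}
I would evaluate both $m$-th derivatives with Lemma~\ref{lem-deriv-Gauss}, taking $\alpha=1$, $\gamma=n$ and $\beta=\pm1$. This yields
\begin{equation*}
\frac{(-1)^m}{(2w)^m}\sum_{k=0}^m(-n)_k(2m-2k-1)!!\binom{2m-k-1}{k-1}\Bigl[w^{k/2}\bigl(1+\sqrt w\,\bigr)^{n-k}-(-1)^kw^{k/2}\bigl(1-\sqrt w\,\bigr)^{n-k}\Bigr].
\end{equation*}
The $k=0$ term carries $\binom{2m-1}{-1}=0$, so it drops out.

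Next I substitute $w=z^2$ and $\sqrt w=z$, and simplify coefficients. The main pieces are
\begin{equation*}
(-n)_k=(-1)^k\frac{n!}{(n-k)!},\qquad \frac{1}{n}\,\frac{n!}{(n-k)!}=(k-1)!\binom{n-1}{k-1},
\end{equation*}
\begin{equation*}
(2m-2k-1)!!\binom{2m-k-1}{k-1}(k-1)!=\frac{(2m-k-1)!}{2^{m-k}(m-k)!}=\frac{(m-1)!}{2^{m-k}}\binom{2m-k-1}{m-1}.
\end{equation*}
The prefactor $w^{m-1/2}(2w)^{-m}$ times $w^{k/2}$ becomes $2^{-m}z^{k-1}$. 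Collecting the powers of $2$ should produce $\frac{1}{2^{m+1}}\frac{(m-1)!}{(2m-3)!!}2^k$, and the signs $(-1)^k$ from $(-n)_k$ should turn the bracket into $(1-z)^{n-k}-(-1)^k(1+z)^{n-k}$, which is \eqref{Gauaa1-eq}.

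I expect the main obstacle to be the bookkeeping of signs and powers of $2$, together with the regime $k>n$. When $n<m$, the term $(-n)_k$ vanishes for $n<k\le m$, while $\binom{n-1}{k-1}$ vanishes by the extended-binomial convention, so those terms agree. I also need to check consistency at $n=1$, where $1-2a=n$ guards against a zero denominator; this is why I divide by $n$ and not by $2a-1$. As a sanity check, setting $m=n$ should reproduce \eqref{2F1(izan-peraz)} in Theorem~A. Finally, the case $m=1$ needs $(2m-3)!!=(-1)!!=1$, which I would verify directly.
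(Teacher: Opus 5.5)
Your route is the paper's route: the closed form at $c=\frac32$, the $c$-lowering derivative formula, Lemma~\ref{lem-deriv-Gauss} with $\alpha=1$, $\beta=\pm1$, $\gamma=n$, and then the same coefficient simplifications. Those simplifications, and your handling of the terms with $k>n$, are correct.

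There is, however, a concrete sign error, which goes unnoticed because you only say the signs ``should'' work out. The correct value is $\bigl(\frac32-m\bigr)_m=\frac12\cdot\bigl(-\frac12\bigr)\dotsm\bigl(\frac32-m\bigr)=(-1)^{m-1}\frac{(2m-3)!!}{2^m}$, not $(-1)^{m}\frac{(2m-3)!!}{2^m}$; for $m=1$ it equals $\frac12$. With your value, your prefactor $\frac{(-1)^m2^mw^{m-1/2}}{(2m-3)!!\,2n}$ times the factor $\frac{(-1)^m}{(2w)^m}$ from the lemma is $+\frac{1}{2n(2m-3)!!\sqrt{w}}$. Then $(-n)_k=(-1)^k\frac{n!}{(n-k)!}$ turns $(1+z)^{n-k}-(-1)^k(1-z)^{n-k}$ into $-\bigl[(1-z)^{n-k}-(-1)^k(1+z)^{n-k}\bigr]$. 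So, carried out as written, your computation proves the negative of \eqref{Gauaa1-eq}.

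The $m=n=1$ check you propose exposes this at once. ${}_2F_1\bigl(0,\frac12;\frac12;w\bigr)=1$, while your combined formula gives $-\sqrt{w}\,\bigl(2\sqrt{w}\,\bigr)'=-1$. With $(-1)^{m-1}$ in place of $(-1)^m$, the product of the prefactors becomes $-\frac{1}{2n(2m-3)!!\sqrt{w}}$. That extra minus sign, combined with the $(-1)^k$ from $(-n)_k$, gives exactly the bracket $(1-z)^{n-k}-(-1)^k(1+z)^{n-k}$ and hence \eqref{Gauaa1-eq}.

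There is also a smaller slip. The Pochhammer identity you quote to justify the derivative formula has $(c)_j$ and $(c-m)_j$ interchanged. Termwise differentiation needs $(c-m)_j\langle c-1+j\rangle_m=(c-m)_m(c)_j$, where both sides equal $(c-m)_{m+j}$. Your version already fails at $m=j=1$, where it reads $c^2=(c-1)^2$. The derivative formula itself is the standard one the paper cites from the literature, so this affects only your justification, not the argument.
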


\begin{thm}\label{Gauaa2-thm}
For $m,n\in\mathbb{N}$, the Gauss hypergeometric function ${\,}_2F_1$ has the explicit expression
\begin{multline}\label{Gauaa2-eq}
{\,}_2F_1\biggl(-\frac{n}{2},\frac{1-n}{2};\frac{1}{2}-m;z^2\biggr)
=\frac{n!}{2\binom{2m}{m}} \sum_{k=0}^{m}\frac{2^k}{k!}\binom{2m-2k}{m-k} \\
\times \sum_{\ell=0}^{k} \frac{(2k-2\ell-1)!!}{(n-\ell)!} \binom{2k-\ell-1}{\ell-1} \bigl[(1-z)^{n-\ell} +(-1)^{\ell}(1+z)^{n-\ell}\bigr] z^\ell.
\end{multline}
\end{thm}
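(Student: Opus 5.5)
The plan is to realize the left-hand side of~\eqref{Gauaa2-eq} as an $m$-th derivative of an elementary function, and then evaluate that derivative with the Leibniz rule and Lemma~\ref{lem-deriv-Gauss}. Throughout I write $w=z^2$, so that $\sqrt{w}=z$.

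\emph{Step 1 (the case $m=0$).} Since $\binom{n}{2j}=\frac{(-n/2)_j((1-n)/2)_j}{(1/2)_j\,j!}$, the binomial theorem gives
\begin{equation*}
(1+\sqrt{w}\,)^n+(1-\sqrt{w}\,)^n=2\,{}_2F_1\biggl(-\frac{n}{2},\frac{1-n}{2};\frac12;w\biggr).
\end{equation*}

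\emph{Step 2 (lowering the bottom parameter).} I would use the standard termwise identity
\begin{equation*}
\frac{\td^m}{\td w^m}\bigl[w^{c-1}{}_2F_1(a,b;c;w)\bigr]=(c-m)_m\,w^{c-m-1}\,{}_2F_1(a,b;c-m;w).
\end{equation*}
It follows from $\frac{\td^m}{\td w^m}w^{c-1+j}=\langle c-1+j\rangle_m w^{c-1+j-m}$ together with $(c)_j\langle c-1+j\rangle_m=(c-m)_m(c-m)_j$. Since the series is a terminating polynomial in $w$ and $(1/2-m)_j\ne0$, everything is legitimate. Taking $c=\frac12$ yields
\begin{equation*}
{}_2F_1\biggl(-\frac{n}{2},\frac{1-n}{2};\frac12-m;w\biggr)=\frac{w^{m+1/2}}{2(1/2-m)_m}\frac{\td^m}{\td w^m}\Bigl\{w^{-1/2}\bigl[(1+\sqrt{w}\,)^n+(1-\sqrt{w}\,)^n\bigr]\Bigr\}.
\end{equation*}

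\emph{Step 3 (expanding the derivative).} Apply the Leibniz rule to each product $w^{-1/2}(1\pm\sqrt{w}\,)^n$. This gives $\sum_{k=0}^m\binom{m}{k}\bigl(w^{-1/2}\bigr)^{(m-k)}\bigl[(1\pm\sqrt w\,)^n\bigr]^{(k)}$, and the two factors are handled as follows.
\begin{itemize}
\item The first factor is $\bigl(w^{-1/2}\bigr)^{(m-k)}=\langle-\tfrac12\rangle_{m-k}w^{-1/2-m+k}$, with $\langle-\tfrac12\rangle_{m-k}=(-1)^{m-k}\frac{(2m-2k-1)!!}{2^{m-k}}$.
\item For the second factor I use Lemma~\ref{lem-deriv-Gauss} with $\alpha=1$, $\beta=\pm1$, $\gamma=n$, noting that $(-n)_\ell=(-1)^\ell\frac{n!}{(n-\ell)!}$. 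This produces the inner sum over $\ell$ with the factors $\frac{(2k-2\ell-1)!!}{(n-\ell)!}\binom{2k-\ell-1}{\ell-1}$ and $(\pm\sqrt w\,)^\ell(1\pm\sqrt w\,)^{n-\ell}$.
\end{itemize}
The sign $\beta^\ell=(-1)^\ell$ from the ``$-$'' branch is what creates the bracket $(1-z)^{n-\ell}+(-1)^\ell(1+z)^{n-\ell}$, after possibly swapping the roles of the two branches. Terms with $\ell>n$ vanish because $(-n)_\ell=0$, which is consistent with $1/(n-\ell)!=0$ there.

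\emph{Step 4 (collecting constants).} The powers of $w$ combine to $w^{m+1/2}\cdot w^{-1/2-m+k}\cdot w^{-k}\cdot w^{\ell/2}=z^\ell$, which is exactly the power in~\eqref{Gauaa2-eq}. For the constants I use $(1/2-m)_m=(-1)^m\frac{(2m-1)!!}{2^m}$, and then rewrite
\begin{equation*}
\binom{m}{k}\frac{(2m-2k-1)!!}{(2m-1)!!}=\frac{2^k}{k!}\cdot\frac{\binom{2m-2k}{m-k}}{\binom{2m}{m}}\cdot(\text{powers of }2)
\end{equation*}
using $(2j-1)!!=\frac{(2j)!}{2^j j!}$. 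This should produce the prefactor $\frac{n!}{2\binom{2m}{m}}\frac{2^k}{k!}\binom{2m-2k}{m-k}$.

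The main obstacle I expect is this last bookkeeping of signs and powers of $2$. The factors $(-1)^{m-k}$, $(-1)^m$, $(-1)^k$ from Lemma~\ref{lem-deriv-Gauss}, $(-1)^\ell$ from $(-n)_\ell$, and $(\pm1)^\ell$ from $\beta$ must all cancel to leave only the $(-1)^\ell$ in the bracket. I would verify the final normalization against Theorem~B at $m=n$ and against the trivial case $m=0$. A secondary point is the convention $\binom{-1}{-1}=1$ at $k=\ell=0$, which is already built into Lemma~\ref{lem-deriv-Gauss}.
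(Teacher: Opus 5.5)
Your proposal follows the paper's proof essentially step for step: the closed form at $c=\frac12$, the parameter-lowering derivative formula with $c=\frac12$, the Leibniz rule, Lemma~\ref{lem-deriv-Gauss} with $\beta=\pm1$, and the values of $\langle-\frac12\rangle_{m-k}$ and $(\frac12-m)_m$. The bookkeeping you were worried about does close: the signs $(-1)^m$, $(-1)^{m-k}$, $(-1)^k$ cancel, the explicit powers of $2$ reduce to $\frac12$, and $\binom{m}{k}\frac{(2m-2k-1)!!}{(2m-1)!!}=\frac{2^k}{k!}\binom{2m-2k}{m-k}/\binom{2m}{m}$ holds exactly, with no leftover powers of $2$.

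The one slip is in your justification of Step~2. Since $\langle c-1+j\rangle_m=(c-m+j)_m$ and $(c-m)_{m+j}=(c-m)_m(c)_j=(c-m)_j(c-m+j)_m$, the correct relation is $(c-m)_j\langle c-1+j\rangle_m=(c-m)_m(c)_j$. Your version has $(c)_j$ and $(c-m)_j$ interchanged and already fails at $m=j=1$; the derivative formula itself is correct.
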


The preceding two theorems can be extended to yield the following pair of results.

\begin{thm}\label{comb-id-thm1}
For $m\in\mathbb{N}_0$, the Gauss hypergeometric function ${\,}_2F_1$ has the explicit expression
\begin{multline}\label{comb-id-Eq1}
{\,}_2F_1\biggl(a,a+\frac{1}{2};\frac{3}{2}-m;z^2\biggr)
=\frac{1}{2(2a-1)} \frac{1}{(2m-3)!!} \sum_{k=0}^{m}(2m-2k-1)!!\binom{2m-k-1}{k-1}\\
\times\bigl[(2a-1)_k(1+z)^{1-2a-k} -\langle1-2a\rangle_k(1-z)^{1-2a-k}\bigr]z^{k-1}.
\end{multline}
\end{thm}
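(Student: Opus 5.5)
The plan is to obtain the case $m=0$ in closed form and then lower the third parameter from $\frac32$ to $\frac32-m$ by an $m$-fold differentiation, evaluating that derivative with Lemma~\ref{lem-deriv-Gauss}.

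\textbf{Step 1 (base case).} I will use the classical quadratic-type evaluation
\begin{equation*}
{\,}_2F_1\biggl(a,a+\frac12;\frac32;z^2\biggr)=\frac{(1+z)^{1-2a}-(1-z)^{1-2a}}{2(1-2a)z}.
\end{equation*}
It can be checked directly by expanding the right-hand side binomially: only odd powers of $z$ survive, and the duplication formula $(2a)_{2j}=2^{2j}(a)_j(a+\frac12)_j$ matches the coefficients. Writing $w=z^2$, this gives
\begin{equation*}
w^{1/2}{\,}_2F_1\biggl(a,a+\frac12;\frac32;w\biggr)=\frac{(1+\sqrt w\,)^{1-2a}-(1-\sqrt w\,)^{1-2a}}{2(1-2a)}.
\end{equation*}

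\textbf{Step 2 (lowering the third parameter).} I will invoke the standard differentiation formula
\begin{equation*}
\frac{\td^m}{\td w^m}\bigl[w^{c-1}{\,}_2F_1(a,b;c;w)\bigr]=(c-m)_m\,w^{c-m-1}{\,}_2F_1(a,b;c-m;w).
\end{equation*}
It follows termwise from $\frac{\td^m}{\td w^m}w^{c-1+j}=\frac{\Gamma(c+j)}{\Gamma(c+j-m)}w^{c-1+j-m}$ together with $\frac{(c)_j}{(c-m)_j}\frac{\Gamma(c+j)}{\Gamma(c+j-m)}\cdot\frac{1}{(c)_j}=\dots$, which collapses to $(c-m)_m/(c-m)_j$. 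Taking $c=\frac32$, I get
\begin{equation*}
(3/2-m)_m\,w^{1/2-m}{\,}_2F_1\biggl(a,a+\frac12;\frac32-m;w\biggr)=\frac{1}{2(1-2a)}\frac{\td^m}{\td w^m}\Bigl[(1+\sqrt w\,)^{1-2a}-(1-\sqrt w\,)^{1-2a}\Bigr].
\end{equation*}

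\textbf{Step 3 (evaluating the derivative).} I will apply Lemma~\ref{lem-deriv-Gauss} with $\alpha=1$, $\gamma=1-2a$, and $\beta=\pm1$.
\begin{itemize}
\item The factor $(-\gamma)_k$ becomes $(2a-1)_k$.
\item For $\beta=-1$, the factor $(\beta\sqrt w\,)^k$ contributes $(-1)^k z^k$, and $(-1)^k(2a-1)_k=\langle1-2a\rangle_k$. This produces exactly the bracket in~\eqref{comb-id-Eq1}.
\item The lemma supplies the prefactor $\frac{(-1)^m}{(2w)^m}$, and $w^{m-1/2}=z^{2m-1}$. Combined with $z^k$, these turn into $z^{k-1}$ and a factor $2^{-m}$.
\end{itemize}

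\textbf{Step 4 (constants).} I compute
\begin{equation*}
\biggl(\frac32-m\biggr)_m=\frac{(-1)^{m-1}(2m-3)!!}{2^m}.
\end{equation*}
The powers of $2$ then cancel, and the sign $(-1)^m/(-1)^{m-1}=-1$ turns $\frac{1}{2(1-2a)}$ into $\frac{1}{2(2a-1)}$. This yields~\eqref{comb-id-Eq1}.

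\textbf{Step 5 (edge cases).} For $m\ge1$ the $k=0$ term vanishes, since $\binom{2m-1}{-1}=0$. For $m=0$, the formula reduces to Step~1, using the conventions $(-3)!!=-1$, $(-1)!!=1$, and $\binom{-1}{-1}=1$ adopted in the preliminaries.

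I expect the main obstacle to be the bookkeeping of signs and double-factorial constants in Steps~3 and~4, together with the boundary conventions at $m=0$. Some care is also needed with the parameter restriction $2a\ne1$, which could be removed by a limiting argument if desired.
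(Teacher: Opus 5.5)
Your proposal is correct and follows essentially the same route as the paper. Both use the closed form ${\,}_2F_1(a,a+\frac12;\frac32;z^2)=\frac{(1+z)^{1-2a}-(1-z)^{1-2a}}{2(1-2a)z}$, lower the third parameter with $[w^{c-1}{\,}_2F_1(a,b;c;w)]^{(m)}=(c-m)_m w^{c-m-1}{\,}_2F_1(a,b;c-m;w)$ at $c=\frac32$, evaluate the derivative by Lemma~\ref{lem-deriv-Gauss} with $\alpha=1$, $\beta=\pm1$, $\gamma=1-2a$, and finish with $\bigl(\frac32-m\bigr)_m=(-1)^{m-1}(2m-3)!!/2^m$. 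The only differences are that you verify these two classical formulas directly instead of citing Abramowitz--Stegun, and you handle the cases $m=0$ and $2a=1$ explicitly, which the paper leaves implicit.
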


\begin{thm}\label{comb-id-thm2}
For $m\in\mathbb{N}_0$, the Gauss hypergeometric function ${\,}_2F_1$ has the explicit expression
\begin{multline}\label{comb-id-Eq2}
{\,}_2F_1\biggl(a,a+\frac{1}{2};\frac{1}{2}-m;z^2\biggr)
=\frac{1}{2^{m+1}}\frac{m!}{(2m-1)!!} \sum_{k=0}^m\frac{2^k}{k!}\binom{2m-2k}{m-k} \sum_{\ell=0}^{k} (2k-2\ell-1)!!\\
\times \binom{2k-\ell-1}{\ell-1}\bigl[(2a)_\ell(1+z)^{-2a-\ell}+\langle-2a\rangle_\ell (1-z)^{-2a-\ell}\bigr]z^\ell.
\end{multline}
\end{thm}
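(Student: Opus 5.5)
The plan is to start from a known closed form with lower parameter $\frac12$ and lower it to $\frac12-m$ by an $m$-fold differentiation in $w=z^2$. For the base case, the classical quadratic identity
\begin{equation*}
{\,}_2F_1\biggl(a,a+\frac{1}{2};\frac{1}{2};w\biggr)=\frac{(1+\sqrt{w}\,)^{-2a}+(1-\sqrt{w}\,)^{-2a}}{2}
\end{equation*}
follows by comparing power series: $(a)_j(a+\frac12)_j=2^{-2j}(2a)_{2j}$ and $(\frac12)_jj!=2^{-2j}(2j)!$. For the lowering step, the standard differentiation formula
\begin{equation*}
\frac{\td^m}{\td w^m}\bigl[w^{c-1}{\,}_2F_1(a,b;c;w)\bigr]=(c-m)_m\,w^{c-m-1}{\,}_2F_1(a,b;c-m;w)
\end{equation*}
is checked termwise on the power series. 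Taking $c=\frac12$ gives $(\frac12-m)_m=(-1)^m\frac{(2m-1)!!}{2^m}$, so
\begin{equation*}
{\,}_2F_1\biggl(a,a+\frac12;\frac12-m;w\biggr)=\frac{(-1)^m2^{m-1}w^{m+1/2}}{(2m-1)!!}\frac{\td^m}{\td w^m}\Bigl[w^{-1/2}\bigl((1+\sqrt w\,)^{-2a}+(1-\sqrt w\,)^{-2a}\bigr)\Bigr].
\end{equation*}
I would argue this for $|z|<1$ and $a$ generic, and extend to all $a\in\mathbb{C}$ by analyticity in $a$ if needed, since both sides are entire in $a$ for fixed small $z$.

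Next I expand the $m$-th derivative with the Leibniz rule, pairing $(w^{-1/2})^{(m-k)}$ with the $k$-th derivative of $(1\pm\sqrt w\,)^{-2a}$. The first factor is
\begin{equation*}
(w^{-1/2})^{(m-k)}=(-1)^{m-k}\frac{(2m-2k-1)!!}{2^{m-k}}w^{-1/2-(m-k)}.
\end{equation*}
For the second factor I apply Lemma~\ref{lem-deriv-Gauss} with $\alpha=1$, $\gamma=-2a$, $\beta=\pm1$ and $n$ replaced by $k$, with inner summation index $\ell$. Then $(-\gamma)_\ell=(2a)_\ell$, and for $\beta=-1$ the factor $(-1)^\ell(2a)_\ell$ equals $\langle-2a\rangle_\ell$. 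This produces exactly the bracket $(2a)_\ell(1+z)^{-2a-\ell}+\langle-2a\rangle_\ell(1-z)^{-2a-\ell}$, together with $(2k-2\ell-1)!!\binom{2k-\ell-1}{\ell-1}z^\ell$, once $\sqrt w=z$ is substituted.

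The last step is bookkeeping, and I expect the constants to be the main (purely computational) obstacle. The Leibniz coefficient is rewritten using
\begin{equation*}
\binom{m}{k}(2m-2k-1)!!=\frac{m!}{k!}\frac{(2m-2k)!}{2^{m-k}(m-k)!\,(m-k)!}=\frac{m!}{k!}\frac{1}{2^{m-k}}\binom{2m-2k}{m-k}.
\end{equation*}
For the signs, $(-1)^m$ from the lowering step, $(-1)^{m-k}$ from $w^{-1/2}$ and $(-1)^k$ from Lemma~\ref{lem-deriv-Gauss} multiply to $+1$. For the powers of $w$, $w^{m+1/2}\cdot w^{-1/2-m+k}\cdot w^{-k}$ gives $w^0$. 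The powers of $2$ are $2^{m-1}\cdot2^{-(m-k)}\cdot2^{-(m-k)}\cdot2^{-k}$, which with the $\frac{m!}{k!}$ factor should combine into $\frac{1}{2^{m+1}}\frac{m!}{(2m-1)!!}\frac{2^k}{k!}\binom{2m-2k}{m-k}$. I would recheck this factor of $2$ carefully, also for $k=0$, where $\bell_{0,0}=1$ and the convention $\binom{-1}{-1}=1$ is needed. Finally, I would test the case $m=0$, where the formula must reduce to the base identity, as a check on all normalizations.
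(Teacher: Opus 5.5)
Your proposal is correct and is essentially the paper's proof: it uses the same base formula \eqref{Entry15.1.9abram}, the lowering formula \eqref{drive-form-gauss-n} with $c=\frac12$, the Leibniz rule with $(w^{-1/2})^{(m-k)}$, and Lemma~\ref{lem-deriv-Gauss} with $\alpha=1$, $\beta=\pm1$, $\gamma=-2a$; the only difference is that you verify the two classical hypergeometric formulas by power series instead of citing them. The constant you flagged does check out, since the powers of $2$ total $2^{k-m-1}$ and give exactly $\frac{1}{2^{m+1}}\frac{m!}{(2m-1)!!}\frac{2^k}{k!}\binom{2m-2k}{m-k}$, and no analytic continuation in $a$ is needed because every step holds for all $a$ when $|z|<1$.
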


\begin{rem}
Setting $a=\tfrac{1-n}{2}$ in Theorem~\ref{comb-id-thm1} yields Theorem~\ref{Gauaa1-thm}, whose specialization to $m=n$ is Theorem~A.
Similarly, setting $a=-\tfrac{n}{2}$ in Theorem~\ref{comb-id-thm2} yields Theorem~\ref{Gauaa2-thm}, whose specialization to $m=n$ is Theorem~B.
\end{rem}

In the next section, with the aid of Lemmas~\ref{lem-deriv-Gauss} and~\ref{Lem2-ID}, we will present detailed proofs of the four theorems above.
\par
In Section~\ref{sec-combin-IDs}, we will present several combinatorial identities, together with a corollary and some remarks, derived from the preceding theorems.

\section{Proofs of main results}
We are now in a position to establish our main results.

\begin{proof}[Proof of Theorem~\ref{Gauaa1-thm}]
In~\cite[pp.~1015--1016]{Gradshteyn-Ryzhik-Table-8th}, we find
\begin{align*}
{\,}_2F_1\biggl(\frac{1-n}{2},-\frac{n-2}{2};\frac{3}{2};\frac{z^2}{t^2}\biggr)&=\frac{(t+z)^n-(t-z)^n}{2n z t^{n-1}}
\intertext{and}
{\,}_2F_1\biggl(-\frac{n-2}{2},\frac{1-n}{2};\frac{3}{2};-\tan^2z\biggr)&=\frac{\sin(n z)}{n\sin z\cos^{n-1}z}
\end{align*}
for $n\in\mathbb{N}$. These two formulas can be rewritten as
\begin{equation}\label{gauss-form1}
{\,}_2F_1\biggl(\frac{1-n}{2},\frac{2-n}{2};\frac{3}{2}; z\biggr)=\frac{\bigl(1+\sqrt{z}\,\bigr)^n-\bigl(1-\sqrt{z}\,\bigr)^n}{2n\sqrt{z}\,}, \quad n\in\mathbb{N}.
\end{equation}
\par
In~\cite[p.~557, Entry~15.2.4]{abram}, we find the derivative formula
\begin{equation}\label{drive-form-gauss-n}
[z^{c-1}{\,}_2F_1(a,b;c;z)]^{(n)}=(c-n)_n z^{c-n-1}{\,}_2F_1(a,b;c-n;z), \quad n\in\mathbb{N}.
\end{equation}
Replacing $n$ by $m$ in~\eqref{drive-form-gauss-n}, taking $(a,b;c)=\bigl(\frac{1-n}{2},\frac{2-n}{2};\frac{3}{2}\bigr)$ in~\eqref{drive-form-gauss-n}, and using~\eqref{gauss-form1}, we arrive at
\begin{align*}
{\,}_2F_1\biggl(\frac{1-n}{2},\frac{2-n}{2};&\frac{3}{2}-m;z\biggr)
=\frac{z^{m-1/2}}{\bigl(\frac32-m\bigr)_m} \biggl[\sqrt{z}\,{\,}_2F_1\biggl(\frac{1-n}{2},\frac{2-n}{2};\frac{3}{2};z\biggr)\biggr]^{(m)}\\
&=\frac{z^{m-1/2}}{\bigl(\frac32-m\bigr)_m} \biggl[\frac{\bigl(1+\sqrt{z}\,\bigr)^n-\bigl(1-\sqrt{z}\,\bigr)^n}{2n}\biggr]^{(m)}\\
&=\frac1{2n}\frac{z^{m-1/2}}{\bigl(\frac32-m\bigr)_m} \Bigl\{\bigl[\bigl(1+\sqrt{z}\,\bigr)^n\bigr]^{(m)} -\bigl[\bigl(1-\sqrt{z}\,\bigr)^n\bigr]^{(m)}\Bigr\}
\end{align*}
for $m,n\in\mathbb{N}$, where, by taking $\alpha=1$, $\beta=\pm1$, and $\gamma=n$ in Lemma~\ref{lem-deriv-Gauss},
\begin{multline}\label{Plus-Deriv}
\bigl[\bigl(1+\sqrt{z}\,\bigr)^n\bigr]^{(m)}\\
=\frac{(-1)^m}{(2z)^m} \sum_{k=0}^{m}(-n)_k (2m-2k-1)!!\binom{2m-k-1}{k-1}\bigl(1+\sqrt{z}\,\bigr)^{n-k}\bigl(\sqrt{z}\,\bigr)^k
\end{multline}
and
\begin{multline}\label{Minus-Deriv}
\bigl[\bigl(1-\sqrt{z}\,\bigr)^n\bigr]^{(m)}\\
=\frac{(-1)^m}{(2z)^m} \sum_{k=0}^{m} \langle n\rangle_k(2m-2k-1)!!\binom{2m-k-1}{k-1} \bigl(1-\sqrt{z}\,\bigr)^{n-k}\bigl(\sqrt{z}\,\bigr)^k
\end{multline}
for $m,n\in\mathbb{N}_0$. As a result, we arrive at
\begin{align*}
{\,}_2F_1\biggl(\frac{1-n}{2},&\frac{2-n}{2};\frac{3}{2}-m;z\biggr)
=\frac{1}{n}\frac{(-1)^{m+1}}{2^{m+1}}\frac{1}{\bigl(\frac32-m\bigr)_m} \sum_{k=0}^{m}\langle n\rangle_k (2m-2k-1)!! \\
&\quad\times\binom{2m-k-1}{k-1}\bigl[\bigl(1-\sqrt{z}\,\bigr)^{n-k}-(-1)^{k}\bigl(1+\sqrt{z}\,\bigr)^{n-k}\bigr] \bigl(\sqrt{z}\,\bigr)^{k-1}\\
&=\frac{1}{2}\frac{(n-1)!}{(2m-3)!!} \sum_{k=0}^{m}\frac{(2m-2k-1)!!}{(n-k)!} \binom{2m-k-1}{k-1} \\
&\quad\times\bigl[\bigl(1-\sqrt{z}\,\bigr)^{n-k} -(-1)^{k}\bigl(1+\sqrt{z}\,\bigr)^{n-k}\bigr]\bigl(\sqrt{z}\,\bigr)^{k-1}\\
&=\frac{1}{2^{m+1}}\frac{(m-1)!}{(2m-3)!!} \sum_{k=0}^{m}2^k\binom{n-1}{k-1}\binom{2m-k-1}{m-1}\\
&\quad\times\bigl[\bigl(1-\sqrt{z}\,\bigr)^{n-k} -(-1)^{k}\bigl(1+\sqrt{z}\,\bigr)^{n-k}\bigr]\bigl(\sqrt{z}\,\bigr)^{k-1}
\end{align*}
for $m,n\in\mathbb{N}$, where we used the immediate equalities
\begin{equation}\label{Comb-expans}
\biggl(\frac32-m\biggr)_m=\frac{(-1)^{m-1}}{2^m}(2m-3)!!
\quad\text{and}\quad
\langle n\rangle_k=\frac{n!}{(n-k)!}.
\end{equation}
Further replacing $\sqrt{z}\,$ by $z$, considering $\binom{n}{-1}=0$ for $n\in\mathbb{N}_0$, and simplifying lead to the formula~\eqref{Gauaa1-eq}.
The proof of Theorem~\ref{Gauaa1-thm} is complete.
\end{proof}

\begin{proof}[Proof of Theorem~\ref{Gauaa2-thm}]
In~\cite[pp.~1015--1016]{Gradshteyn-Ryzhik-Table-8th}, we find
\begin{align*}
{\,}_2F_1\biggl(-\frac{n}{2},\frac{1-n}{2};\frac{1}{2};\frac{z^2}{t^2}\biggr)&=\frac{(t+z)^n+(t-z)^n}{2t^{n}}
\intertext{and}
{\,}_2F_1\biggl(-\frac{n}{2},\frac{1-n}{2};\frac{1}{2};-\tan^2z\biggr)&=\frac{\cos(n z)}{\cos^{n}z}
\end{align*}
for $n\in\mathbb{N}$. These two formulas can be reformulated as
\begin{equation}\label{gauss-form2}
{\,}_2F_1\biggl(-\frac{n}{2},\frac{1-n}{2};\frac{1}{2};z\biggr)=\frac{\bigl(1+\sqrt{z}\,\bigr)^n+\bigl(1-\sqrt{z}\,\bigr)^n}{2}, \quad n\in\mathbb{N}.
\end{equation}
Accordingly, replacing $n$ by $m$ in~\eqref{drive-form-gauss-n}, taking $(a,b;c)=\bigl(-\frac{n}{2},\frac{1-n}{2};\frac{1}{2}\bigr)$ in~\eqref{drive-form-gauss-n}, and utilizing~\eqref{gauss-form2}, we obtain
\begin{align*}
{\,}_2F_1\biggl(&-\frac{n}{2},\frac{1-n}{2};\frac{1}{2}-m;z\biggr)
=\frac{z^{m+1/2}}{\bigl(\frac12-m\bigr)_m} \biggl[\frac{1}{\sqrt{z}\,}{\,}_2F_1\biggl(-\frac{n}{2},\frac{1-n}{2};\frac{1}{2};z\biggr)\biggr]^{(m)}\\
&=\frac{z^{m+1/2}}{\bigl(\frac12-m\bigr)_m} \biggl[\frac{1}{\sqrt{z}\,}\frac{\bigl(1+\sqrt{z}\,\bigr)^n+\bigl(1-\sqrt{z}\,\bigr)^n}{2}\biggr]^{(m)}\\
&=\frac{1}{2}\frac{z^{m+1/2}}{\bigl(\frac12-m\bigr)_m}\sum_{k=0}^{m}\binom{m}{k}\biggl(\frac{1}{\sqrt{z}\,}\biggr)^{(m-k)} \bigl[\bigl(1+\sqrt{z}\,\bigr)^n+\bigl(1-\sqrt{z}\,\bigr)^n\bigr]^{(k)}\\
&=\frac{1}{2}\frac{z^{m+1/2}}{\bigl(\frac12-m\bigr)_m}\sum_{k=0}^{m}\binom{m}{k}\biggl\langle-\frac{1}{2}\biggr\rangle_{m-k} z^{k-m-1/2}\\
&\quad\times\Biggl[\frac{\bigl(1+\sqrt{z}\,\bigr)^n}{(-2z)^k} \sum_{\ell=0}^{k} (-n)_\ell (2k-2\ell-1)!!\binom{2k-\ell-1}{\ell-1} \biggl(\frac{\sqrt{z}\,}{1+\sqrt{z}\,}\biggr)^\ell\\
&\quad+\frac{\bigl(1-\sqrt{z}\,\bigr)^n}{(-2z)^k} \sum_{\ell=0}^{k} \langle n\rangle_\ell(2k-2\ell-1)!!\binom{2k-\ell-1}{\ell-1} \biggl(\frac{\sqrt{z}\,}{1-\sqrt{z}\,}\biggr)^\ell\Biggr]\\
&=\frac{1}{2}\frac{1}{\bigl(\frac12-m\bigr)_m}\sum_{k=0}^{m}\binom{m}{k}\biggl\langle-\frac{1}{2}\biggr\rangle_{m-k} \frac{(-1)^k}{2^k} \sum_{\ell=0}^{k} \langle n\rangle_\ell(2k-2\ell-1)!!\\
&\quad\times\binom{2k-\ell-1}{\ell-1} \bigl(\sqrt{z}\,\bigr)^\ell \bigl[\bigl(1-\sqrt{z}\,\bigr)^{n-\ell} +(-1)^{\ell}\bigl(1+\sqrt{z}\,\bigr)^{n-\ell}\bigr]\\
&=\frac{n!}{\binom{2m}{m}} \sum_{k=0}^{m}\frac{2^{k-1}}{k!}\binom{2m-2k}{m-k} \sum_{\ell=0}^{k} \frac{(2k-2\ell-1)!!}{(n-\ell)!}\\
&\quad\times\binom{2k-\ell-1}{\ell-1} \bigl[\bigl(1-\sqrt{z}\,\bigr)^{n-\ell} +(-1)^{\ell}\bigl(1+\sqrt{z}\,\bigr)^{n-\ell}\bigr] \bigl(\sqrt{z}\,\bigr)^\ell
\end{align*}
for $m,n\in\mathbb{N}$, where we used the derivative formulas~\eqref{Plus-Deriv} and~\eqref{Minus-Deriv}, and utilized
\begin{equation}\label{Minus-half-poch}
\biggl\langle-\frac{1}{2}\biggr\rangle_{m-k}=(-1)^{m-k}\frac{(2m-2k-1)!!}{2^{m-k}}
\end{equation}
and
\begin{equation}\label{half-poch}
\biggl(\frac12-m\biggr)_m=(-1)^m\frac{(2m-1)!!}{2^m}.
\end{equation}
Further replacing $z$ by $z^2$ results in the explicit expression~\eqref{Gauaa2-eq}.
The proof of Theorem~\ref{Gauaa2-thm} is complete.
\end{proof}

\begin{proof}[Proof of Theorem~\ref{comb-id-thm1}]
In~\cite[p.~556, Entry~15.1.10]{abram}, we find
\begin{equation}\label{Entry15.1.10abram}
{\,}_2F_1\biggl(a,a+\frac12;\frac32;z^2\biggr)
=\frac{(1+z)^{1-2a}-(1-z)^{1-2a}}{2(1-2a)z}.
\end{equation}
Replacing $n$ with $m$ in~\eqref{drive-form-gauss-n}, then setting $(a,b;c)=\bigl(a,a+\frac{1}{2};\frac{3}{2}\bigr)$ in~\eqref{drive-form-gauss-n}, and finally rearranging the resulting expression, we obtain
\begin{multline}\label{Gauss-m-Eq}
{\,}_2F_1\biggl(a,a+\frac{1}{2};\frac{3}{2}-m;z\biggr)
=\frac{z^{m-1/2}}{\bigl(\frac{3}{2}-m\bigr)_m} \biggl[\sqrt{z}\,{\,}_2F_1\biggl(a,a+\frac{1}{2};\frac{3}{2};z\biggr)\biggr]^{(m)}\\
=(-1)^{m-1}\frac{2^mz^{m-1/2}}{(2m-3)!!} \biggl[\frac{\bigl(1+\sqrt{z}\,\bigr)^{1-2a}-\bigl(1-\sqrt{z}\,\bigr)^{1-2a}}{2(1-2a)}\biggr]^{(m)}\\
=\frac{(-1)^{m-1}}{1-2a} \frac{2^{m-1}z^{m-1/2}}{(2m-3)!!} \bigl[\bigl(1+\sqrt{z}\,\bigr)^{1-2a}-\bigl(1-\sqrt{z}\,\bigr)^{1-2a}\bigr]^{(m)}
\end{multline}
for $m\in\mathbb{N}_0$, where we used~\eqref{Entry15.1.10abram} and the first one in~\eqref{Comb-expans}.
\par
Taking $\alpha=1$, $\beta=\pm1$, and $\gamma=1-2a$ in Lemma~\ref{lem-deriv-Gauss} yields
\begin{multline*}
\bigl[\bigl(1+\sqrt{z}\,\bigr)^{1-2a}\bigr]^{(m)}\\
=\frac{(-1)^m}{(2z)^m} \sum_{k=0}^{m}(2a-1)_k (2m-2k-1)!!\binom{2m-k-1}{k-1}\bigl(1+\sqrt{z}\,\bigr)^{1-2a-k}\bigl(\sqrt{z}\,\bigr)^k
\end{multline*}
and
\begin{multline*}
\bigl[\bigl(1-\sqrt{z}\,\bigr)^{1-2a}\bigr]^{(m)}\\
=\frac{(-1)^m}{(2z)^m} \sum_{k=0}^{m} \langle1-2a\rangle_k(2m-2k-1)!!\binom{2m-k-1}{k-1} \bigl(1-\sqrt{z}\,\bigr)^{1-2a-k} \bigl(\sqrt{z}\,\bigr)^k
\end{multline*}
for $m\in\mathbb{N}_0$. As a result, we acquire
\begin{multline*}
\bigl[\bigl(1+\sqrt{z}\,\bigr)^{1-2a}-\bigl(1-\sqrt{z}\,\bigr)^{1-2a}\bigr]^{(m)}
=\frac{(-1)^m}{(2z)^m} \sum_{k=0}^{m}(2m-2k-1)!!\binom{2m-k-1}{k-1}\\
\times\bigl[(2a-1)_k\bigl(1+\sqrt{z}\,\bigr)^{1-2a-k} -\langle1-2a\rangle_k\bigl(1-\sqrt{z}\,\bigr)^{1-2a-k}\bigr]\bigl(\sqrt{z}\,\bigr)^k.
\end{multline*}
Substituting this result into~\eqref{Gauss-m-Eq} leads to
\begin{multline*}
{\,}_2F_1\biggl(a,a+\frac{1}{2};\frac{3}{2}-m;z\biggr)
=\frac{1}{2(2a-1)} \frac{1}{(2m-3)!!} \sum_{k=0}^{m}(2m-2k-1)!!\binom{2m-k-1}{k-1}\\
\times\bigl[(2a-1)_k\bigl(1+\sqrt{z}\,\bigr)^{1-2a-k} -\langle1-2a\rangle_k\bigl(1-\sqrt{z}\,\bigr)^{1-2a-k}\bigr]\bigl(\sqrt{z}\,\bigr)^{k-1}.
\end{multline*}
Further replacing $\sqrt{z}\,$ by $z$ yields~\eqref{comb-id-Eq1}. The proof of Theorem~\ref{comb-id-thm1} is complete.
\end{proof}

\begin{proof}[Proof of Theorem~\ref{comb-id-thm2}]
In~\cite[p.~556, Entry~15.1.9]{abram}, we find
\begin{equation}\label{Entry15.1.9abram}
{\,}_2F_1\biggl(a,a+\frac12;\frac12;z^2\biggr)
=\frac{(1+z)^{-2a}+(1-z)^{-2a}}{2}.
\end{equation}
Replacing $n$ with $m$ in~\eqref{drive-form-gauss-n}, then letting $(a,b;c)=\bigl(a,a+\frac{1}{2};\frac{1}{2}\bigr)$ in~\eqref{drive-form-gauss-n}, and then using~\eqref{Entry15.1.9abram}, we obtain
\begin{gather*}
{\,}_2F_1\biggl(a,a+\frac{1}{2};\frac{1}{2}-m;z\biggr)
=\frac{z^{m+1/2}}{\bigl(\frac{1}{2}-m\bigr)_m} \biggl[\frac{{\,}_2F_1\bigl(a,a+\frac{1}{2};\frac{1}{2};z\bigr)}{\sqrt{z}\,}\biggr]^{(m)}\\
=(-1)^{m}\frac{2^mz^{m+1/2}}{(2m-1)!!} \biggl[\frac{\bigl(1+\sqrt{z}\,\bigr)^{-2a}+\bigl(1-\sqrt{z}\,\bigr)^{-2a}}{2\sqrt{z}\,}\biggr]^{(m)}\\
=(-1)^{m}\frac{2^{m-1}z^{m+1/2}}{(2m-1)!!} \sum_{k=0}^m\binom{m}{k}\biggl(\frac{1}{\sqrt{z}\,}\biggr)^{(m-k)} \bigl[\bigl(1+\sqrt{z}\,\bigr)^{-2a}+\bigl(1-\sqrt{z}\,\bigr)^{-2a}\bigr]^{(k)}\\
=\frac{1}{2^{m+1}}\frac{m!}{(2m-1)!!} \sum_{k=0}^m\frac{2^k}{k!}\binom{2m-2k}{m-k} \sum_{\ell=0}^{k} (2k-2\ell-1)!!\binom{2k-\ell-1}{\ell-1}\\
\times \bigl[(2a)_\ell\bigl(1+\sqrt{z}\,\bigr)^{-2a-\ell}+\langle-2a\rangle_\ell \bigl(1-\sqrt{z}\,\bigr)^{-2a-\ell}\bigr]\bigl(\sqrt{z}\,\bigr)^\ell,
\end{gather*}
where we used the identities~\eqref{Minus-half-poch} and~\eqref{half-poch} and employed the derivative formulas
\begin{multline*}
\bigl[\bigl(1+\sqrt{z}\,\bigr)^{-2a}\bigr]^{(k)}\\
=\frac{(-1)^k}{(2z)^k} \sum_{\ell=0}^{k}(2a)_\ell (2k-2\ell-1)!!\binom{2k-\ell-1}{\ell-1}\bigl(1+\sqrt{z}\,\bigr)^{-2a-\ell}\bigl(\sqrt{z}\,\bigr)^\ell
\end{multline*}
and
\begin{multline*}
\bigl[\bigl(1-\sqrt{z}\,\bigr)^{-2a}\bigr]^{(k)}\\
=\frac{(-1)^k}{(2z)^k} \sum_{\ell=0}^{k} \langle-2a\rangle_\ell(2k-2\ell-1)!!\binom{2k-\ell-1}{\ell-1} \bigl(1-\sqrt{z}\,\bigr)^{-2a-\ell} \bigl(\sqrt{z}\,\bigr)^\ell
\end{multline*}
for $k\in\mathbb{N}_0$, which are deduced by taking $\alpha=1$, $\beta=\pm1$, and $\gamma=-2a$ in~\eqref{Eq-deriv-Gauss}.
Further replacing $\sqrt{z}\,$ by $z$ yields~\eqref{comb-id-Eq2}. The proof of Theorem~\ref{comb-id-thm2} is complete.
\end{proof}

\section{Combinatorial identities}\label{sec-combin-IDs}
In this section, using the explicit formulas~\eqref{Gauaa1-eq} and~\eqref{Gauaa2-eq} from Theorems~\ref{Gauaa1-thm} and~\ref{Gauaa2-thm}, together with~\eqref{comb-id-Eq1} and~\eqref{comb-id-Eq2} from Theorems~\ref{comb-id-thm1} and~\ref{comb-id-thm2}, we obtain the following combinatorial identities.

\begin{thm}\label{oKpopThm}
For $n\ge m\in\mathbb{N}_0$, we have the combinatorial identity
\begin{equation}\label{oKpop}
\sum_{k=0}^{m}\frac{2^k}{k!} \binom{2m-2k}{m-k} \sum_{\ell=0}^{k} \frac{(-1)^\ell}{2^\ell} \frac{(2k-2\ell-1)!!}{(n-\ell)!} \binom{2k-\ell-1}{\ell-1}
=\frac{1}{n!}\binom{2m-n}{m}.
\end{equation}
\end{thm}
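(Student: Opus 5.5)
Evaluate the explicit formula \eqref{Gauaa2-eq} of Theorem~\ref{Gauaa2-thm} at a single convenient point and compare it with an independent evaluation of the left-hand hypergeometric function. The natural choice is $z=1$, i.e.\ $z^2=1$. On the right-hand side of \eqref{Gauaa2-eq} we have $(1-z)^{n-\ell}=0$ for $\ell<n$, while $(1+z)^{n-\ell}=2^{n-\ell}$. Since $\ell\le k\le m\le n$, the term $(1-z)^{n-\ell}$ survives only when $\ell=n$, which forces $\ell=k=m=n$.

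So I would first treat $n>m$ (the case $n=m$ only needs that one extra term tracked, or can be taken from Theorem~D after checking $\binom{m}{m}=1$). For $n>m$ the right-hand side becomes
\[
\frac{n!}{2\binom{2m}{m}}\,2^n\sum_{k=0}^{m}\frac{2^k}{k!}\binom{2m-2k}{m-k}\sum_{\ell=0}^{k}\frac{(-1)^\ell}{2^\ell}\frac{(2k-2\ell-1)!!}{(n-\ell)!}\binom{2k-\ell-1}{\ell-1},
\]
which is exactly a constant multiple of the double sum in \eqref{oKpop}.

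On the left-hand side I would use the Chu--Vandermonde identity \eqref{Entry15.4.24NIST-HB-2010}. One of $-\frac n2$ and $\frac{1-n}{2}$ is a nonpositive integer, say $-N$, and the other parameter is $b$. Then
\[
{}_2F_1\Bigl(-\tfrac n2,\tfrac{1-n}2;\tfrac12-m;1\Bigr)=\frac{(\frac12-m-b)_N}{(\frac12-m)_N}.
\]
This should be handled in the two parity cases $n=2N$ and $n=2N+1$ and rewritten via Gamma functions. A cleaner route is to avoid parity altogether. Use Gauss's summation $\Gamma(c)\Gamma(c-a-b)/[\Gamma(c-a)\Gamma(c-b)]$ with $c-a-b=n-m$, then apply the duplication formula to $\Gamma(\frac12-m+\frac n2)\Gamma(1-m+\frac n2)$. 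This should give
\[
\frac{\Gamma(\frac12-m)\,\Gamma(n-m)}{\Gamma(n-2m)}\cdot 2^{-n}\cdot(\text{duplication constants}).
\]
After using \eqref{half-poch}, I expect this to collapse to $2^{1-n}\binom{2m}{m}^{-1}\binom{2m-n}{m}$ times the appropriate factor.

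Equating the two sides and cancelling $n!2^{n-1}/\binom{2m}{m}$ should then yield $\frac1{n!}\binom{2m-n}{m}$.

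The main obstacle is the left-hand evaluation when $n-2m$ is a nonpositive integer, i.e.\ $m\le n\le 2m$. There $\Gamma(n-2m)$ has a pole, so the Gauss-formula expression must be read as a limit or as a terminating Chu--Vandermonde value. This is exactly where the extended binomial coefficient $\binom{2m-n}{m}$ with possibly negative upper index, following the paper's convention, must be matched.

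I would handle this by working directly with the terminating form \eqref{Entry15.4.24NIST-HB-2010} in each parity of $n$. I would write both Pochhammer ratios as products of odd and even factors and verify that they equal $\langle 2m-n\rangle_m/m!$ times the stated constants, which is a polynomial identity in $n$. Alternatively, one could prove the identity for large $n$ (for instance $n>2m$) via the Gamma form and extend it. Both sides, multiplied by $n!$, are polynomials in $n$ of degree at most $m$: on the left $n!/(n-\ell)!=\langle n\rangle_\ell$, and on the right $\langle 2m-n\rangle_m/m!$. Agreement for infinitely many $n$ then gives it for all $n\ge m$, and this also absorbs the special case $n=m$ without separate treatment.
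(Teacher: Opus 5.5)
Your argument is correct, and it follows a genuinely different route from the paper's.

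\textbf{Checking your steps.} Of the two options you list for the left-hand side, the second one is the one that goes through cleanly. For $n>2m$ every Gamma argument in Gauss's formula is regular, and
\[
{}_2F_1\Bigl(-\tfrac{n}{2},\tfrac{1-n}{2};\tfrac12-m;1\Bigr)
=\frac{\Gamma(\frac12-m)\,\Gamma(n-m)}{\Gamma(\frac{n-2m}{2})\,\Gamma(\frac{n-2m+1}{2})}
=\frac{(-1)^m2^{n-1-m}}{(2m-1)!!}\,\langle n-m-1\rangle_m
=\frac{2^{n-1}}{\binom{2m}{m}}\binom{2m-n}{m}.
\]
This uses the duplication formula, $\Gamma(\frac12-m)=(-1)^m2^m\sqrt{\pi}/(2m-1)!!$, and $\langle n-m-1\rangle_m=(-1)^m\langle 2m-n\rangle_m$.

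So the power of $2$ is $2^{n-1}$, not $2^{1-n}$ as you wrote at one point. Your final cancellation of $n!\,2^{n-1}/\binom{2m}{m}$ already assumes the correct value, so this is only a slip.

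Your extension step is also sound. Since $n!/(n-\ell)!=\langle n\rangle_\ell$, $n!$ times the left side of \eqref{oKpop} is a polynomial in $n$ of degree at most $m$. Under the paper's convention, $\binom{2m-n}{m}=\langle 2m-n\rangle_m/m!$ for every integer $n$. Agreement for all $n>2m$ therefore gives \eqref{oKpop} for all $n\ge m$. This includes $n=m$ and the range $m\le n\le 2m$ where $\Gamma(n-2m)$ has poles. The case $m=0$, which Theorem~\ref{Gauaa2-thm} does not cover, is trivial.

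\textbf{Comparison with the paper.} The paper replaces $m,n$ by $m-1,n-1$ in \eqref{Gauaa2-eq}, so that it describes the same function as \eqref{Gauaa1-eq}. It then equates the two explicit formulas as $z\to\pm1$ and evaluates the resulting alternating single sum with the tabulated identity $\sum_{k}(-1)^k\binom{x}{k}\binom{2n-k}{n}=\binom{2n-x}{n}$. This only reaches $n>m$, and the boundary case $n=m$ is imported from the first proof of Theorem~D in the earlier paper.

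Your version needs only one explicit formula plus classical Gauss (Chu--Vandermonde) summation, and it is self-contained. The paper's route avoids Gamma-function bookkeeping, but at the cost of a second explicit formula and an external input for $n=m$.

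Your final step is a polynomial identity in $n$, so it also gives \eqref{oKpop} for $0\le n<m$. Here one reads $1/(n-\ell)!=0$ for $\ell>n$, which is consistent with $\langle n\rangle_\ell=0$. That is the conjecture in Remark~\ref{Rem-Conj}, obtained without passing through Theorem~\ref{ThmoKpopThm}.
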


\begin{proof}
Replacing $m$ by $m-1$ and $n$ by $n-1$ in~\eqref{Gauaa2-eq} gives
\begin{multline*}
{\,}_2F_1\biggl(\frac{1-n}{2},\frac{2-n}{2};\frac{3}{2}-m;z^2\biggr)
=\frac{(n-1)!}{\binom{2(m-1)}{m-1}} \sum_{k=0}^{m-1}\frac{2^{k-1}}{k!}\binom{2m-2k-2}{m-k-1}\\
\times\sum_{\ell=0}^{k} \frac{(2k-2\ell-1)!!}{(n-\ell-1)!} \binom{2k-\ell-1}{\ell-1} \bigl[(1-z)^{n-\ell-1} +(-1)^{\ell}(1+z)^{n-\ell-1}\bigr] z^\ell
\end{multline*}
for $m,n\in\mathbb{N}$.
Comparing this with~\eqref{Gauaa1-eq} in Theorem~\ref{Gauaa1-thm} and simplifying result in
\begin{multline*}
\sum_{k=0}^{m-1}(2m-2k-3)!!\binom{m-1}{k} \sum_{\ell=0}^{k} \frac{(2k-2\ell-1)!!}{(n-\ell-1)!} \binom{2k-\ell-1}{\ell-1} \\
\times\bigl[(1-z)^{n-\ell-1} +(-1)^{\ell}(1+z)^{n-\ell-1}\bigr] z^\ell\\
=\frac{(m-1)!}{2^m(n-1)!}\sum_{k=1}^{m}2^k\binom{n-1}{k-1} \binom{2m-k-1}{m-1}\bigl[(1-z)^{n-k}-(-1)^{k}(1+z)^{n-k}\bigr]z^{k-1}.
\end{multline*}
When $n>m\in\mathbb{N}$, further letting $z\to\pm1$ and simplifying lead to
\begin{multline}\label{opopopop}
\sum_{k=0}^{m-1}(2m-2k-3)!!\binom{m-1}{k} \sum_{\ell=0}^{k} \frac{(-1)^\ell}{2^\ell} \frac{(2k-2\ell-1)!!}{(n-\ell-1)!} \binom{2k-\ell-1}{\ell-1}\\
=\frac{(m-1)!}{2^{m-1}(n-1)!}\sum_{k=1}^{m}(-1)^{k-1} \binom{n-1}{k-1} \binom{2m-k-1}{m-1}\\
=\frac{(m-1)!}{2^{m-1}(n-1)!}\sum_{k=0}^{m-1}(-1)^{k} \binom{n-1}{k} \binom{2m-k-2}{m-1}.
\end{multline}
In~\cite[p.~65]{Sprugnoli-Gould-2006}, we find the combinatorial identity
\begin{equation*}
\sum_{k=0}^{n}(-1)^k\binom{x}{k}\binom{2n-k}{n}=\binom{2n-x}{n}.
\end{equation*}
This implies that
\begin{equation*}
\sum_{k=0}^{m-1}(-1)^{k} \binom{n-1}{k} \binom{2m-k-2}{m-1}=\binom{2m-n-1}{m-1}.
\end{equation*}
Substituting this identity into~\eqref{opopopop} yields
\begin{multline*}
\sum_{k=0}^{m-1}(2m-2k-3)!!\binom{m-1}{k} \sum_{\ell=0}^{k} \frac{(-1)^\ell}{2^\ell} \frac{(2k-2\ell-1)!!}{(n-\ell-1)!} \binom{2k-\ell-1}{\ell-1}\\
=\frac{(m-1)!}{2^{m-1}(n-1)!}\binom{2m-n-1}{m-1},
\end{multline*}
which can be rearranged as
\begin{multline}\label{opop}
\sum_{k=0}^{m-1}\frac{2^k}{k!} \binom{2m-2k-2}{m-k-1} \sum_{\ell=0}^{k} \frac{(-1)^\ell}{2^\ell} \frac{(2k-2\ell-1)!!}{(n-\ell-1)!} \binom{2k-\ell-1}{\ell-1}\\
=\frac{1}{(n-1)!}\binom{2m-n-1}{m-1},
\end{multline}
for $n>m\in\mathbb{N}$.
\par
On~\cite[p.~225]{DA19034-cas-sc.tex}, in the first proof of~\cite[Theorem~4]{DA19034-cas-sc.tex}, we gained
\begin{multline*}
\sum_{k=0}^{n-1} \frac{2^{k}}{k!}\binom{2n-2k-2}{n-k-1} \sum_{j=0}^{k}\frac{(-1)^{j}}{2^{j}}\frac{(2k-2j-1)!!}{(n-j-1)!}\binom{2k-j-1}{j-1}\\
=\frac{1}{(n-1)!}\sum_{k=1}^{n}(-1)^{k-1}\binom{2n-k-1}{n-1}\binom{n-1}{k-1}
=\frac{1}{(n-1)!}
\end{multline*}
for $n\in\mathbb{N}$. Combining this identity with~\eqref{opop} for $n>m\in\mathbb{N}$ reveals that the identity~\eqref{opop} is valid for $n\ge m\in\mathbb{N}$.
\par
Replacing $m$ by $m+1$ and $n$ by $n+1$ in~\eqref{opop} for $n\ge m\in\mathbb{N}$ leads to~\eqref{oKpop} for $n\ge m\in\mathbb{N}_0$. The proof of Theorem~\ref{oKpopThm} is complete.
\end{proof}

\begin{rem}\label{Rem-Conj}
We conjecture that the combinatorial identity~\eqref{oKpop} in Theorem~\ref{oKpopThm} remains valid for all $n < m \in \mathbb{N}_0$. Equivalently, the identity holds for every $m \in \mathbb{N}_0$ and $n \in \mathbb{Z}$. See \url{https://mathoverflow.net/q/513017} (accessed on 7 July 2026).
\end{rem}

\begin{thm}\label{ThmoKpopThm}
For $m\in\mathbb{N}$ and $a\in\mathbb{C}$, we have the combinatorial identity
\begin{equation}\label{ThmoKpopQe}
\sum_{k=1}^{m}\frac{1}{(k!)^2}\binom{2m-2k}{m-k}\sum_{\ell=1}^{k} \binom{k}{\ell}\ell(2k-\ell-1)! (2a)_\ell
=\binom{2m+2a}{m}.
\end{equation}
\end{thm}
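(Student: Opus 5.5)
The plan is to recognise the inner sum as the sum already evaluated in Lemma~\ref{Lem2-ID}, collapse the left-hand side to a single sum, and finish with a generating-function (Vandermonde-type) convolution for central binomial coefficients.

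\textbf{Step 1: reduce the inner sum to Lemma~\ref{Lem2-ID}.} Since $\binom{k}{\ell}\ell=\frac{k!}{(\ell-1)!(k-\ell)!}$, the $k$-th inner sum divided by $(k!)^2$ equals
\begin{equation*}
\frac{1}{k!}\sum_{\ell=1}^{k}\frac{(2k-\ell-1)!}{(k-\ell)!(\ell-1)!}(2a)_\ell .
\end{equation*}
This is exactly the sum appearing in the second line of the proof of Lemma~\ref{Lem2-ID}, with $n\to k$ and $2a-1\to 2a$ (that is, $a\to a+\frac12$). That lemma gives the value $2a\,\Gamma(2a+2k)/\Gamma(2a+k+1)$. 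Hence the $k$-th summand of \eqref{ThmoKpopQe} becomes
\begin{equation*}
\binom{2m-2k}{m-k}\frac{2a}{2a+2k}\binom{2a+2k}{k}.
\end{equation*}
I will check the cases $2a\in\{0,-1,-2,\dotsc\}$ separately. There the Gamma quotients must be read as polynomials in $a$. Alternatively, one can argue that both sides are polynomials in $a$ and conclude by continuity.

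\textbf{Step 2: convolution by generating functions.} Let $B(x)=\frac{1-\sqrt{1-4x}}{2x}$ be the Catalan generating function. I will use the classical expansions
\begin{equation*}
\sum_{k\ge0}\frac{r}{2k+r}\binom{2k+r}{k}x^k=B(x)^r
\quad\text{and}\quad
\sum_{k\ge0}\binom{2k+r}{k}x^k=\frac{B(x)^r}{\sqrt{1-4x}},
\end{equation*}
together with $\sum_{k\ge0}\binom{2k}{k}x^k=\frac{1}{\sqrt{1-4x}}$. All three follow from Lagrange inversion applied to $B=1+xB^2$. Taking $r=2a$ and multiplying the first series by the third, the coefficient of $x^m$ in $B^{2a}/\sqrt{1-4x}$ is $\binom{2m+2a}{m}$. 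This gives
\begin{equation*}
\sum_{k=0}^{m}\binom{2m-2k}{m-k}\frac{2a}{2a+2k}\binom{2a+2k}{k}=\binom{2m+2a}{m}.
\end{equation*}

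\textbf{Step 3: the $k=0$ term, which is the main obstacle.} The sum in Step~2 runs from $k=0$, and its $k=0$ term is $\binom{2m}{m}$. Read literally, \eqref{ThmoKpopQe} starts at $k=1$, and its inner sum is empty for $k=0$. As literally written, the identity therefore fails already at $m=1$: the left side is $2a$ while the right side is $2+2a$.

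The identity becomes correct once the $k=0$ term is assigned the value $\binom{2m}{m}$. This is consistent with the paper's conventions. In the form of Theorem~\ref{comb-id-thm2} and Lemma~\ref{lem-deriv-Gauss}, the inner coefficient is $(2k-2\ell-1)!!\binom{2k-\ell-1}{\ell-1}$, which at $k=\ell=0$ equals $(-1)!!\binom{-1}{-1}=1$ under the convention $\binom{-1}{-1}=1$. The same convention is exactly why Lemma~\ref{Lem2-ID} and Theorem~D include $k=0$.

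I will therefore write the proof for the sum over $0\le k\le m$ with this convention. Equivalently, in the literal $k\ge1$ form, the right-hand side should be $\binom{2m+2a}{m}-\binom{2m}{m}$, and I will flag this explicitly.

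As a cross-check, I would also obtain the identity in the paper's own style. Set $z=1$ formally, or compare the coefficient of $(1\pm z)^{-2a}$ after letting $z\to1$, in \eqref{comb-id-Eq2} from Theorem~\ref{comb-id-thm2}. Then compare with the known value ${\,}_2F_1\bigl(a,a+\frac12;\frac12-m;1\bigr)$, which the Chu--Vandermonde identity \eqref{Entry15.4.24NIST-HB-2010} gives when $a$ is a nonpositive half-integer. Polynomiality in $a$ then extends the result to all $a\in\mathbb{C}$. The generating-function route above is cleaner, so I would present it as the main proof.
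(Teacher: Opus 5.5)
Your Step~3 diagnosis is correct. As printed, \eqref{ThmoKpopQe} is false: for $m=1$ the left side is $2a$ and the right side is $2a+2$, and for $m=2$ the discrepancy is $6=\binom{4}{2}$. The true identity has the extra $k=0$ term $\binom{2m}{m}$. The paper's own argument actually proves this corrected version. In \eqref{comb-id-Eq2} the $k=\ell=0$ coefficient is $(-1)!!\binom{-1}{-1}=1$. The term is lost only when the proof rewrites $(2k-2\ell-1)!!\binom{2k-\ell-1}{\ell-1}$ as $\frac{2^\ell\ell(2k-\ell-1)!}{k!\,2^k}\binom{k}{\ell}$, which is the meaningless $0\cdot(-1)!$ at $k=\ell=0$, and then starts the sum at $k=1$. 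The same slip makes the $m=0$ case, claimed at the end of that proof, read $0=1$.

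Your proof of the corrected identity is sound and takes a genuinely different route. The paper never evaluates the inner sum. Instead it:
\begin{enumerate}
\item writes ${\,}_2F_1\bigl(a,a+\frac12;\frac32-m;z^2\bigr)$ in two ways, via \eqref{comb-id-Eq2} with $m\to m-1$ and via \eqref{comb-id-Eq1};
\item lets $z\to1$ under the restriction $a<\frac{1-m}{2}$, so that every $(1-z)$-term vanishes;
\item evaluates the resulting single sum from \eqref{comb-id-Eq1} by Lemma~\ref{Lem2-ID}, that is, by Chu--Vandermonde;
\item shifts $m\to m+1$ and extends to $a\in\mathbb{C}$ by the identity theorem.
\end{enumerate}
You apply the same lemma to the inner sum itself. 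Up to normalization, your inner sum is the paper's single sum with $m\mapsto k$ and $a\mapsto a+\frac12$, so the two proofs use Lemma~\ref{Lem2-ID} at different places. This collapses the left side to the convolution $\sum_{k}\binom{2m-2k}{m-k}\frac{2a}{2a+2k}\binom{2a+2k}{k}$, which you finish with the Catalan-power series $B^{2a}$ and $B^{2a}/\sqrt{1-4x}$.

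Your route is more direct and does not depend on Theorems~\ref{comb-id-thm1} and~\ref{comb-id-thm2}. Both sides are polynomials in $a$, so no boundary limit, restriction on $a$, or analytic continuation is needed. The $k=0$ term appears automatically as the constant term $1$ of $B^{2a}$, which is exactly what exposes the misprint. It also explains the identity structurally, as a convolution of central binomial coefficients with the coefficients of a power of the Catalan series. The paper's route instead presents the identity as a consequence of the two explicit formulas in Theorems~\ref{comb-id-thm1} and~\ref{comb-id-thm2}, which is the theme of the paper. The cost is more machinery and the bookkeeping error described above.
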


\begin{proof}
Replacing $m\in\mathbb{N}$ by $m-1$ in~\eqref{comb-id-Eq2} yields
\begin{multline*}
{\,}_2F_1\biggl(a,a+\frac{1}{2};\frac{3}{2}-m;z^2\biggr)
=\frac{1}{2}\frac{1}{(2m-3)!!} \sum_{k=0}^{m-1}(2m-2k-3)!!\binom{m-1}{k} \\
\times \sum_{\ell=0}^{k} \frac{2^\ell\ell(2k-\ell-1)!}{k!2^{k}}\binom{k}{\ell}\bigl[(2a)_\ell(1+z)^{-2a-\ell}+\langle-2a\rangle_\ell (1-z)^{-2a-\ell}\bigr]z^\ell.
\end{multline*}
Comparing this with~\eqref{comb-id-Eq1} and simplifying result in
\begin{align*}
\sum_{k=0}^{m-1}(2m-2k-3)!!&\binom{m-1}{k}\sum_{\ell=0}^{k} \frac{2^\ell\ell(2k-\ell-1)!}{k!2^{k}}\binom{k}{\ell}\\
&\quad\times\bigl[(2a)_\ell(1+z)^{-2a-\ell}+\langle-2a\rangle_\ell (1-z)^{-2a-\ell}\bigr]z^\ell\\
&=\frac{1}{2a-1} \sum_{k=0}^{m}(2m-2k-1)!!\binom{2m-k-1}{k-1}\\
&\quad\times\bigl[(2a-1)_k(1+z)^{1-2a-k} -\langle1-2a\rangle_k(1-z)^{1-2a-k}\bigr]z^{k-1}
\end{align*}
for $m\in\mathbb{N}$. For $m\in\mathbb{N}$ and $a<\frac{1-m}{2}$, letting $z\to\pm1$ gives
\begin{multline}\label{a-final-Id}
\sum_{k=0}^{m-1}(2m-2k-3)!!\binom{m-1}{k}\sum_{\ell=0}^{k} \frac{2^\ell\ell(2k-\ell-1)!}{k!2^{k}}\binom{k}{\ell}\frac{(2a)_\ell}{2^\ell}\\
=\frac{1}{2a-1}\sum_{k=0}^{m}(2m-2k-1)!!\binom{2m-k-1}{k-1}\frac{(2a-1)_{k}}{2^{k-1}}.
\end{multline}
Substituting~\eqref{ID-Lem2} in Lemma~\ref{Lem2-ID} into~\eqref{a-final-Id} yields
\begin{multline*}
\sum_{k=0}^{m-1}(2m-2k-3)!!\binom{m-1}{k}\sum_{\ell=0}^{k} \frac{2^\ell\ell(2k-\ell-1)!}{k!2^{k}}\binom{k}{\ell}\frac{(2a)_\ell}{2^\ell}\\
=\frac{1}{2^{m-1}}\frac{\Gamma(2a+2m-1)}{\Gamma(2a+m)}
\end{multline*}
for $m\in\mathbb{N}$ and $a<\frac{1-m}{2}$. Further replacing $m$ by $m+1$ leads to
\begin{equation*}
\frac{m!}{2^m}\sum_{k=0}^{m}\frac{2^k}{k!}\binom{2m-2k}{m-k}\sum_{\ell=0}^{k} \frac{2^\ell\ell(2k-\ell-1)!}{k!2^{k}}\binom{k}{\ell}\frac{(2a)_\ell}{2^\ell}
=\frac{1}{2^{m}}\frac{\Gamma(2a+2m+1)}{\Gamma(2a+m+1)}
\end{equation*}
for $m\in\mathbb{N}_0$ and $a<-\frac{m}{2}$. This can be rearranged as the combinatorial identity~\eqref{ThmoKpopQe} for $m\in\mathbb{N}_0$ and $a<-\frac{m}{2}$.
\par
Since both the Pochhammer symbol $(2a)_\ell$ and the binomial coefficient $\binom{2m+2a}{m}$ are analytic functions of $a\in\mathbb{C}$ for fixed $\ell,m\in\mathbb{N}_0$, the uniqueness theorem for analytic functions implies that the identity~\eqref{ThmoKpopQe} extends to all $a\in\mathbb{C}$. Hence, the identity~\eqref{ThmoKpopQe} holds for every $m\in\mathbb{N}_0$ and $a\in\mathbb{C}$. The proof of Theorem~\ref{ThmoKpopThm} is complete.
\end{proof}

\begin{cor}\label{cor-Gauss-2F}
The identity~\eqref{oKpop} in Theorem~\ref{oKpopThm} is valid for all $m\in\mathbb{N}_0$ and $n\in\mathbb{Z}$. Equivalently, the conjecture posed in Remark~\ref{Rem-Conj} is true.
\end{cor}

\begin{proof}
Taking $a=-\frac{n}{2}$ for $n\in\mathbb{Z}$ in the identity~\eqref{ThmoKpopQe} of Theorem~\ref{ThmoKpopThm} yields
\begin{equation}\label{a=-n/2}
\sum_{k=1}^{m}\frac{1}{(k!)^2}\binom{2m-2k}{m-k}\sum_{\ell=1}^{k} \binom{k}{\ell}\ell(2k-\ell-1)! (-n)_\ell
=\binom{2m-n}{m}.
\end{equation}
Straightforward computation shows that
\begin{gather*}
\binom{k}{\ell}\ell(2k-\ell-1)! (-n)_\ell
=\frac{k!}{\ell!(k-\ell)!}\ell(2k-\ell-1)! (-1)^\ell\langle n\rangle_\ell\\
=(-1)^\ell\frac{k!}{\ell!(k-\ell)!}(2k-2\ell)! \ell!\frac{(2k-\ell-1)!}{(2k-2\ell)!(\ell-1)!}\frac{n!}{(n-\ell)!}\\
=(-1)^\ell\frac{k!n!}{(k-\ell)!}(2k-2\ell)!! \frac{(2k-2\ell-1)!!}{(n-\ell)!}\binom{2k-\ell-1}{\ell-1}\\
=2^kk!n! \frac{(-1)^\ell}{2^\ell}\frac{(2k-2\ell-1)!!}{(n-\ell)!}\binom{2k-\ell-1}{\ell-1}
\end{gather*}
for $\ell\le k\in\mathbb{N}$ and $n\in\mathbb{Z}$. Substituting this into~\eqref{a=-n/2} leads to
\begin{equation*}
n!\sum_{k=1}^{m}\frac{2^k}{k!} \binom{2m-2k}{m-k} \sum_{\ell=1}^{k} \frac{(-1)^\ell}{2^\ell} \frac{(2k-2\ell-1)!!}{(n-\ell)!} \binom{2k-\ell-1}{\ell-1}=\binom{2m-n}{m}
\end{equation*}
for $m\in\mathbb{N}$ and $n\in\mathbb{Z}$. Further dividing both sides by $n!$ yields the identity~\eqref{oKpop} in Theorem~\ref{oKpopThm} for $m\in\mathbb{N}$ and $n\in\mathbb{Z}$.
\par
When $m=0$, the identity~\eqref{oKpop} becomes
\begin{equation*}
\frac{(-1)!!}{n!} \binom{-1}{-1}=\frac{1}{n!}\binom{-n}{0},
\end{equation*}
whose validity is trivial. The proof of Corollary~\ref{cor-Gauss-2F} is complete.
\end{proof}

\section{Conclusions}
The main results and highlights of this paper include the explicit formula~\eqref{comb-id-Eq1} in Theorem~\ref{comb-id-thm1}, the explicit formula~\eqref{comb-id-Eq2} in Theorem~\ref{comb-id-thm2}, and the combinatorial identity~\eqref{ThmoKpopQe} in Theorem~\ref{ThmoKpopThm}. 
\begin{enumerate}
\item
From the explicit formula~\eqref{comb-id-Eq1} in Theorem~\ref{comb-id-thm1}, we can obtain the explicit formula~\eqref{2F1(izan-peraz)} in Theorem~A (\cite[Theorem~1]{DA19034-cas-sc.tex}) as well as the explicit formula~\eqref{Gauaa1-eq} in Theorem~\ref{Gauaa1-thm}. 
\item
From the explicit formula~\eqref{comb-id-Eq2} in Theorem~\ref{comb-id-thm2}, we can derive the explicit formula~\eqref{2F1(izan-peraz)-more} in Theorem~B (\cite[Theorem~2]{DA19034-cas-sc.tex}) together with the explicit formula~\eqref{Gauaa2-eq} in Theorem~\ref{Gauaa2-thm}. 
\item
Finally, from the combinatorial identity~\eqref{ThmoKpopQe} in Theorem~\ref{ThmoKpopThm}, we extend the combinatorial identity~\eqref{oKpop} in Theorem~\ref{oKpopThm} and the identity~\eqref{ID-n-Factorial2sums} in Theorem~D (\cite[Theorem~4]{DA19034-cas-sc.tex}).
\end{enumerate}
\par
The idea, method, and approach presented this paper can be applied to derive more combinatorial identities and to investigate the Legendre polynomials $P_n(z)$, the Jacobi polynomials $P_n^{(\rho,\sigma)}(z)$, the Gegenbauer (or ultraspherical) polynomials $C_n^\nu(z)$, the Gegenbauer polynomials $C_\alpha^\nu(z)$, and their corresponding functions.

\section{Declarations}

\subsubsection*{Authors' Contributions}
All authors contributed equally to the manuscript and read and approved the final manuscript.

\subsubsection*{Funding}
The author was partially supported by the Natural Science Foundation of Inner Mongolia Autonomous Region (Grant No.~2025QN01041) and by the Youth Project of Hulunbuir City for Basic Research and Applied Basic Research (Grant No.~GH2024020).

\subsubsection*{Institutional Review Board Statement}
Not applicable.

\subsubsection*{Informed Consent Statement}
Not applicable.

\subsubsection*{Ethical Approval}
The conducted research is not related to either human or animal use.

\subsubsection*{Availability of Data and Material}
Data sharing is not applicable to this article as no new data were created or analyzed in this study.

\subsubsection*{Acknowledgements}
Not applicable.

\subsubsection*{Competing Interests}
The authors declare that they have no any conflict of competing interests.

\subsubsection*{Use of AI tools declaration}
The authors declare they have not used Artificial Intelligence (AI) tools in the creation of this article.

\end{document}